\newcommand{\1}{\mathbbm{1}} % Blackboard bold 1 for indicator function.
\newcommand{\R}{\mathbb{R}}
\newcommand{\E}{\mathbb{E}}
\newcommand{\CC}{\mathcal C}
\newcommand{\PP}{\mathcal P}
\newcommand{\TT}{\mathcal{T}} 
\newcommand{\eps}{\varepsilon}
\newcommand{\Cl}{\mathcal{C}\ell}
\newcommand{\Var}{\mathbb{V}\mathrm{ar}}
\newcommand{\bP}{\mathbb{P}}
\renewcommand{\P}{\bP} %another version of probability
\def\pcv{\stackrel{\scriptscriptstyle \P}{\longrightarrow}}        % convergence in P
\newcommand{\bal}[1]{\begin{align*}#1\end{align*}}
\renewcommand{\emph}[1]{\textbf{\textit{#1}}}
\newcommand{\ovln}[1]{\overline{#1}}
\theoremstyle{plain}
\newtheorem{theo}{Theorem}[section]
\newtheorem{coro}[theo]{Corollary}
\newtheorem{propo}[theo]{Proposition}
\newtheorem{lemma}[theo]{Lemma}
\theoremstyle{definition}
\newtheorem{defi}[theo]{Definition}
\begin{document}

\begin{center}
{\Large Cycles in random meander systems}

\bigskip
Vladislav Kargin\footnote{{email:
vkargin@binghamton.edu; current address: 4400 Vestal Pkwy East, Department of Mathematics, Binghamton University, Binghamton, 13902-6000, USA}} 

\bigskip
Abstract: 
\end{center}

\begin{quote}
{\small
A meander system is a union of two arc systems that represent non-crossing pairings of the set $[2n] = \{1, \ldots, 2n\}$ in the upper and lower half-plane. In this paper, we consider random meander systems. We show that for a class of random meander systems, -- for simply-generated meander systems, -- the number of cycles in a system of size $n$ grows linearly with $n$ and that the length of the largest cycle in a uniformly random meander system grows at least as $c \log n$ with $c > 0$. We also present numerical evidence suggesting that in a simply-generated meander system of size $n$, (i) the number of cycles of length $k \ll n$ is $\sim n k^{-\beta}$, where $\beta \approx 2$, and (ii) the length of the largest cycle is $\sim n^\alpha$, where $\alpha$ is close to $4/5$.  We compare these results with the growth rates in other families of meander systems, which we call rainbow meanders and comb-like meanders, and which show significantly different behavior. 
}
\end{quote}

%%%%%%%%%%%%%%%%%%%%%%%%%%%%%%%%%%%%%%%%
%\chapter: Meander systems and square-tiled flat surfaces
%
%%%%%%%%%%%%%%%%%%%%%%%%%%%%%%%%%%%%%%%%%%

\section{Setup: Random meander systems}
Let $P = \{(a_1, b_1),  \ldots, (a_n, b_n)\}$ be a pairing on the set  $[2n] = \{1, \ldots, 2n\}$. Pairing $P$  is called \emph{non-crossing} (``NC'') if there are no $\alpha < \beta < \gamma < \delta$ such that $\alpha$ is paired with $\gamma$ and $\beta$ with $\delta$. A non-crossing pairing can be realized by a family of $n$ non-intersecting plane arcs which connect $2n$ points $(1, 0), (2,0) \ldots, (2n, 0) \in \R^2$ and which are all in the upper (or, alternatively, all in the lower) half-plane.  

Let two non-crossing pairings $P_1$ and $P_2$ of $[2n]$  be realized by arcs in the upper and lower half-plane, respectively. We call the resulting system of curves a \emph{meander system}   $M = (P_1, P_2)$. (Sometimes we shorten ``meander system'' to \emph{m.s.}) The pairings $P_1$ and $P_2$ are called the upper and lower pairings of $M$, respectively. 

A \emph{cycle} of a meander system is one of its connected components. 

A meander system $M$ with only one cycle is called a (closed) \emph{meander}. Meanders have first appeared in Poincare's research on dynamical systems on surfaces, and the study of meanders was made popular by V. I. Arnold (\cite{arnold88}, see also a review in \cite{lando_zvonkin92}). The problem of meander enumeration is still not solved although there are very precise predictions based on numerics and on the conformal field theory in physics (\cite{dgg2000}). Recently, some research appeared on meander enumeration in situations, where the combinatorial complexity of the upper and lower pairings is constrained (\cite{dgzz2019}). 

In this paper, we consider meander systems that can have more than one cycle. In particular, we consider a \emph{random} meander system $M_n$ and study basic statistical properties of its cycles as $n \to \infty$. 

 We are mostly interested in the situation when a meander system $M_n$ is chosen with equal probability from all meander systems on $[2n]$. This is equivalent to choosing upper and lower pairings independently and with the uniform distribution. 

We consider meander systems with other distributions as well, in particular simply generated, random comb-like, and random rainbow m.s. They will be defined later and will be used for comparison with the results for uniformly distributed m.s. When we say ``a random meander system'' without qualification we mean a uniformly distributed m.s. 
 
 \begin{wrapfigure}{l}{.4 \textwidth}
              \includegraphics[width= \linewidth]{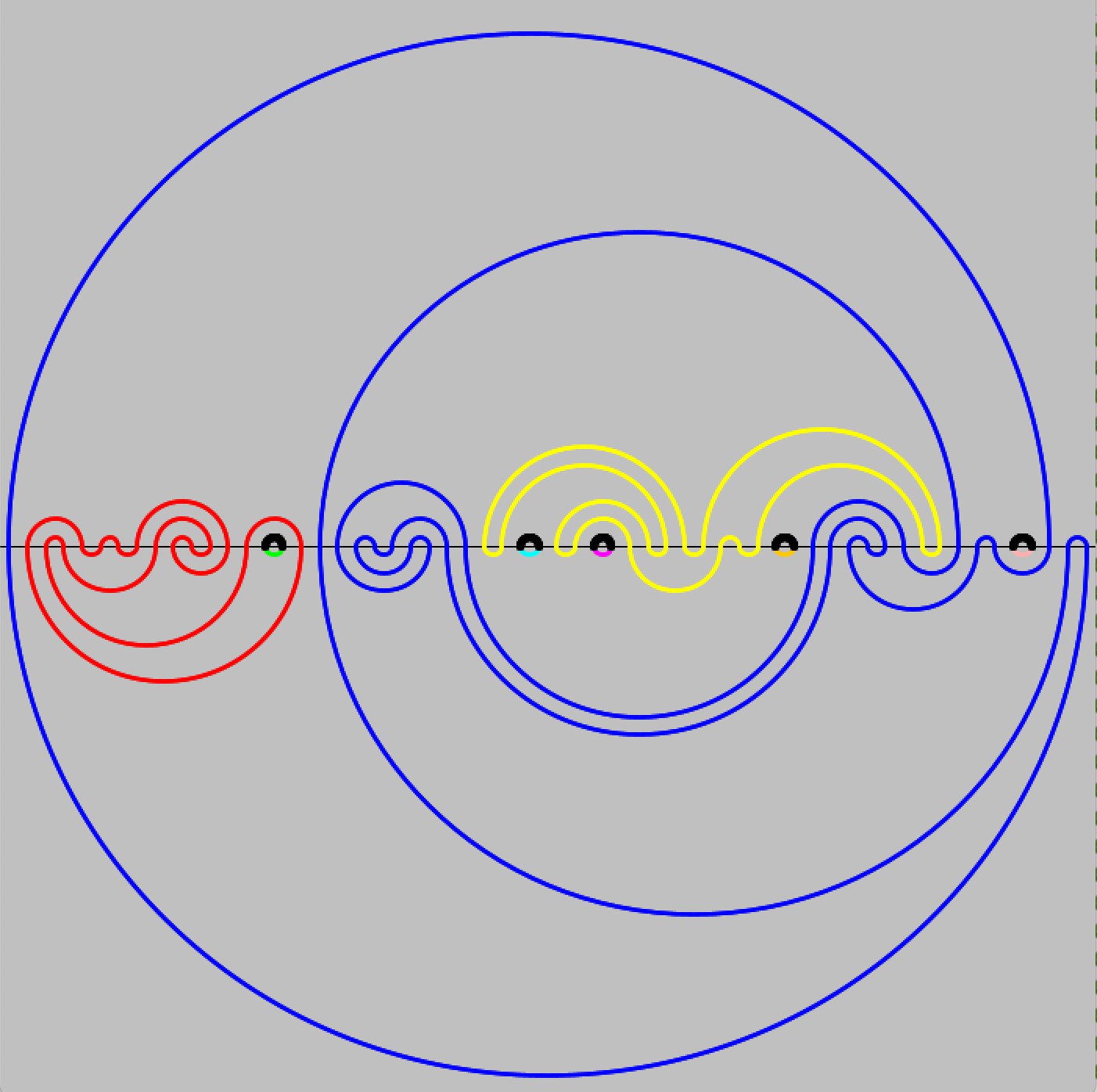}
              \caption{A meander system on $2 \times 30$ points with $8$ cycles.}
              \label{figMeander}
\end{wrapfigure}

 \textbf{Notation}
 The notation $g_n = O(f_n)$ means that $\limsup_{n \to \infty} |g_n/f_n| < \infty$; and $g_n = o(f_n)$ means that $\lim_{n \to \infty} |g_n/f_n| = 0$. We write $f_n = \Omega(g_n)$ as meaning the same thing as $g_n = O(f_n)$, and  we write $f_n \sim g_n$ to denote that $ f_n = g_n (1 + o(1))$. The notation $f_n \ll g_n$ means that $f_n = g_n o(1)$, and $f_n \approx g_n$ means $\log f_n \sim \log g_n$. 
 %, and $f_n  \lesssim g_n$ means that $\limsup_{n \to \infty} (f_n - g_n) \leq 0$. 

 The remainder of the paper is organized as follows. Section \ref{sectionNumberCycles} discusses the number of cycles in random meander systems. Section \ref{sectionLargestCycle} is about the length of the largest cycle in a random m.s. Section \ref{sectionRemarks} provides some additional numerical evidence and concludes with remarks. And Appendix provides proofs which were relegated from the main body of the paper. 

\section{Number of cycles}
\label{sectionNumberCycles}
\subsection{Random meander systems with the uniform distribution}
 Numerics suggest that when the number of points $2n$ grows, the number of cycles $c(M_n)$ in a random meander system $M_n$ converges in distribution to a Gaussian limit, 
\bal{
\frac{c(M_n) - a n}{b \sqrt{n}} \to \mathcal{N}(0, 1),  
}
for some positive constants $a$ and $b$ (with $a \approx 0.23$ and $b \approx 0.42$.)   This limiting behavior is not easy to prove, and we are only going to show that the expectation of $c(M_n)$ grows at least linearly with $n$. 

A related study \cite{fukuda_nechita2019} considers the enumeration of meander systems that have the number of cycles $c(M_n) = n - r$, where $r \geq 0$ is a fixed integer. Their formulas imply that the probability of a random m.s. with $c(M_n) = n - r$ is exponentially small $\sim c_r n^{3/2 + r} 4^{-n}$.

In order to prove that the number of cycles grows at least linearly, we are going to count the number of special cycles which we call ``ringlets''. A \emph{ringlet} $\mathfrak{O}_x$, $1 \leq x \leq 2n - 1$, is a connected component that consists of arcs that connect $x$ and $(x + 1)$ both in upper and lower pairings.  For example, there are $5$ ringlets in Figure \ref{figMeander}.

\begin{theo}
\label{theoNumberRinglets}
Let $M_n$ be a (uniformly distributed) random m.s. on $2n$ points, and let $c_o(M_n)$ denote the total number of ringlets in $M_n$.  Then, $\E c_o(M_n) > (n + 1)/8$. Moreover, for $n \to \infty$, 
\bal{
\E c_o(M_n) \sim \frac{n}{8}.
}
\end{theo}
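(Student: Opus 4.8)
The plan is to reduce $\E c_o(M_n)$ to a single marginal probability for one non-crossing pairing and then evaluate that probability exactly. First I would write $c_o(M_n) = \sum_{x=1}^{2n-1} \1\{\mathfrak{O}_x \text{ is present in } M_n\}$. By definition, the ringlet $\mathfrak{O}_x$ is present exactly when the short arc $\{x,x+1\}$ occurs among the arcs of the upper pairing $P_1$ and, simultaneously, among the arcs of the lower pairing $P_2$ (the two copies of this arc then form a closed loop through the points $x$ and $x+1$, which is automatically a connected component of $M_n$ since no other arc meets $x$ or $x+1$). Since for a uniformly distributed m.s. the pairings $P_1$ and $P_2$ are independent and each uniform on the non-crossing pairings of $[2n]$, linearity of expectation gives
\bal{
\E c_o(M_n) = \sum_{x=1}^{2n-1} \P\big(\{x,x+1\}\in P_1\big)\,\P\big(\{x,x+1\}\in P_2\big) = \sum_{x=1}^{2n-1} p_x^2,
}
where $p_x := \P\big(\{x,x+1\}\in P\big)$ for $P$ a uniformly random non-crossing pairing of $[2n]$.

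The next step is to evaluate $p_x$, and the key (mildly surprising) point is that $p_x$ does not depend on $x$. Let $C_m = \frac{1}{m+1}\binom{2m}{m}$ be the $m$-th Catalan number, i.e.\ the number of non-crossing pairings of a set of $2m$ points on a line. Deleting the two consecutive points $x$ and $x+1$, together with the arc joining them, from any non-crossing pairing of $[2n]$ that contains $\{x,x+1\}$ produces a non-crossing pairing of the remaining $2n-2$ points; conversely, inserting a short arc between two adjacent positions never creates a crossing (no remaining arc can have an endpoint strictly between $x$ and $x+1$), so this is a bijection onto the non-crossing pairings of a $(2n-2)$-element set. Hence the number of non-crossing pairings of $[2n]$ containing $\{x,x+1\}$ is $C_{n-1}$ for every $x$, and a routine manipulation of binomial coefficients yields
\bal{
p_x = \frac{C_{n-1}}{C_n} = \frac{n+1}{2(2n-1)} \qquad (1 \le x \le 2n-1).
}

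Putting the two computations together gives the exact value
\bal{
\E c_o(M_n) = (2n-1)\left(\frac{n+1}{2(2n-1)}\right)^{2} = \frac{(n+1)^2}{4(2n-1)}.
}
From this both assertions follow at once: the inequality $\E c_o(M_n) > (n+1)/8$ is equivalent to $2(n+1) > 2n-1$, which is obvious; and writing $\frac{(n+1)^2}{4(2n-1)} = \frac{n}{8}\cdot\frac{(1+1/n)^2}{1-1/(2n)}$ shows the ratio to $n/8$ tends to $1$, i.e.\ $\E c_o(M_n)\sim n/8$. I do not expect any genuine obstacle here; the only step needing a moment's care is the deletion/insertion bijection establishing that $p_x$ is the same for all $x$ (the endpoints $x=1$ and $x=2n-1$ included), after which everything is bookkeeping. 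Note that the argument in fact delivers the closed form $\E c_o(M_n)=\frac{(n+1)^2}{4(2n-1)}$, so the ``moreover'' clause is automatic once the exact value is in hand.
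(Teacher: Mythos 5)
Your proposal is correct and follows essentially the same route as the paper: indicator decomposition over positions $x$, independence of the two pairings, the identity $p_x = C_{n-1}/C_n = \frac{n+1}{2(2n-1)}$ via the deletion/insertion bijection for the short arc, and the exact value $\frac{(n+1)^2}{4(2n-1)}$, which matches the paper's $\frac{1}{8}\frac{(n+1)^2}{n-\frac12}$. Both conclusions then follow exactly as you state.
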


Since the number of cycles greater than the number of ringlets, Theorem \ref{theoNumberRinglets} immediately implies the desired result about the number of cycles.  
\begin{coro}
\label{coroNumberCycles}
Suppose $M_n$ is a random meander system on $2n$ points, and let $c(M_n)$ denote the number of cycles of $M_n$. Then, 
\bal{
\E \, c(M_n) > \frac{n + 1}{8} 
}
\end{coro}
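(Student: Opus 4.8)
Corollary \ref{coroNumberCycles} is immediate from Theorem \ref{theoNumberRinglets}: a ringlet $\mathfrak{O}_x$ is by definition one of the connected components of $M_n$, hence a cycle, so $c(M_n) \ge c_o(M_n)$ holds deterministically; taking expectations and invoking the bound $\E c_o(M_n) > (n+1)/8$ gives the claim. So the content lies in Theorem \ref{theoNumberRinglets}, and the plan below is really a plan for that.

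First I would write $c_o(M_n)$ as a sum of indicators over the $2n-1$ possible ringlet positions. For $1 \le x \le 2n-1$, the connected component of $M_n$ containing the point $x$ equals the ringlet $\mathfrak{O}_x$ if and only if $x$ is paired with $x+1$ both in the upper pairing $P_1$ and in the lower pairing $P_2$: when this happens, that component consists of exactly the two arcs $\{x,x+1\}$ (above and below) and the two points $x,x+1$, since each point sits on a single arc in each half-plane and so nothing else can attach; and conversely a ringlet at $x$ forces both pairings to contain $\{x,x+1\}$. Hence
\[
c_o(M_n) = \sum_{x=1}^{2n-1} \1\{x \sim x+1 \text{ in } P_1\}\,\1\{x \sim x+1 \text{ in } P_2\},
\]
and, because the upper and lower pairings of a uniform m.s.\ are independent and each uniform on non-crossing pairings of $[2n]$, linearity of expectation gives
\[
\E c_o(M_n) = \sum_{x=1}^{2n-1} p_x^2, \qquad p_x := \P\bigl[x \sim x+1 \text{ in a uniform NC pairing of } [2n]\bigr].
\]

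Next I would compute $p_x$. The number of NC pairings of $[2n]$ is the Catalan number $C_n = \frac{1}{n+1}\binom{2n}{n}$. In any NC pairing with $x \sim x+1$, deleting that arc and closing the gap produces an NC pairing of the remaining $2n-2$ consecutive points (a sub-pairing of an NC pairing is NC), and conversely any NC pairing of those $2n-2$ points extends uniquely by re-inserting the arc $\{x,x+1\}$, which cannot cross anything as there is no point strictly between $x$ and $x+1$. So the number of such pairings is $C_{n-1}$ and $p_x = C_{n-1}/C_n$ for every $x$ --- notably independent of $x$. The elementary identity $C_{n-1}/C_n = \frac{n+1}{2(2n-1)}$ then yields
\[
\E c_o(M_n) = (2n-1)\left(\frac{n+1}{2(2n-1)}\right)^2 = \frac{(n+1)^2}{4(2n-1)}.
\]
Finally, $\frac{(n+1)^2}{4(2n-1)} > \frac{n+1}{8}$ is equivalent to $8(n+1) > 4(2n-1)$, which holds for all $n \ge 1$, and $\frac{(n+1)^2}{4(2n-1)} \sim \frac{n}{8}$ as $n \to \infty$; this gives both assertions of Theorem \ref{theoNumberRinglets} and hence Corollary \ref{coroNumberCycles}.

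I do not anticipate a real obstacle. The two places to be careful are: the exact equivalence ``the component through $x$ is a ringlet'' $\Leftrightarrow$ ``$x\sim x+1$ in both pairings'', which relies on a ringlet being a \emph{full} connected component (so it cannot acquire extra arcs); and the factorization of the expectation, which is precisely the independence of the two pairings under the uniform distribution, as recorded in the setup. The rest is the Catalan count and one line of algebra.
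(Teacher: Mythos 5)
Your proposal is correct and follows essentially the same route as the paper: the corollary is deduced from Theorem \ref{theoNumberRinglets} via $c(M_n)\ge c_o(M_n)$, and the theorem itself is proved exactly as you describe, by summing the squared probabilities $p_n(x)=C_{n-1}/C_n=\frac{n+1}{2(2n-1)}$ over the $2n-1$ ringlet positions to get $\frac{(n+1)^2}{4(2n-1)}>\frac{n+1}{8}$. Your added justifications (why a ringlet is exactly a full component with $x\sim x+1$ in both pairings, and why the arc deletion is a bijection) are correct elaborations of steps the paper treats as evident.
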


\begin{proof}[Proof of Theorem \ref{theoNumberRinglets}]
Let $I_x^u$ be the indicator of the event that the upper pairing contains the pair $(x, x+1)$. Define indicator $I^l_x$ similarly for the lower pairing. Then, $I^u_x I^l_x  $ is the indicator for the event that the meander system contains the ringlet $\mathfrak{O}_x$. We write, 
\bal{
\E c_o(M_n)  =\E \sum_{x = 1}^{2n - 1} I^u_x I^l_x = \sum_{x = 1}^{2n - 1} \E(I^u_x) \E(I^l_x)
= \sum_{x = 1}^{2n - 1} p_n(x)^2, 
}
where $p_n(x)$ is the probability that arc $(x, x+1)$ is in the random NC pairing of $[2n]$.

The number of all non-crossing pairing is the Catalan number $C_n = \binom{2n}{n}/(n+1)$, and the number of non-crossing pairings of the set $[2n] \backslash \{x, x+1\}$ is the Catalan number $C_{n - 1}$. (The arc from $x$ to $x+1$ does not create any additional obstructions to non-crossing pairings of this set.) Hence,  
\bal{
p_n(x) = \frac{C_{n - 1}}{C_{n}} = \frac{n + 1}{2(2n - 1)}.
}
Therefore, 
\bal{
\E c_o(M_n) = \sum_{x = 1}^{2n - 1} \Big(  \frac{n + 1}{2(2n - 1)}\Big)^2
= \frac{1}{8}\frac{(n + 1)^2}{n-\frac{1}{2}} > \frac{n + 1}{8}.
}
It is also clear from this formula that $\E c_o(M_n) \sim n/8$.  
\end{proof}

The proof gives an asymptotic expression for the expected number of ringlets,  $\E c_o(M_n) \sim n/8$.
One can further estimate the expected number of cycles for other shapes and show that these expected numbers grow linearly with $n$. The calculations increase in difficulty when the shapes become more complex.

For example, consider a subclass of all cycles of half-length 2, which we call \emph{staples}, where \emph{upper staple} $[x, y]$, with $x < y  - 1$, is a cycle that consists of two pairs, $(x, x+1)$ and $(y, y+1)$, in the lower pairing, and two pairs, $(x, y+1)$ and $(x + 1, y)$, in the upper pairing. A \emph{lower staple} is defined similarly by exchanging the roles of lower and upper pairings. 

The following proposition is proved in the appendix. 
\begin{propo}
\label{propoStaples}
Let $M_n$ be a random m.s. on $2n$ points, and let $c_{st}(M_n)$ denote the total number of lower and upper staples in $M_n$.  Then, for $n \to \infty$, 
\bal{
\E c_{st}(M_n) \sim \frac{n}{32}.
}
\end{propo}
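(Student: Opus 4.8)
The plan is to run the first-moment computation of Theorem \ref{theoNumberRinglets} one level up, since a staple is pinned down by two prescribed arcs in each pairing rather than one. By the symmetry that swaps the two half-planes, $\E c_{st}(M_n) = 2\,\E N_n$, where $N_n$ denotes the number of upper staples, so it is enough to prove $\E N_n \sim n/64$. For $x<y-1$ with $1\le x$ and $y\le 2n-1$, the four arcs $(x,x+1),(y,y+1)$ (lower) and $(x,y+1),(x+1,y)$ (upper) by themselves close up into a connected component, so the indicator $J_{x,y}$ that $[x,y]$ is an upper staple equals the product of the indicator that the lower pairing contains both $(x,x+1)$ and $(y,y+1)$ and the indicator that the upper pairing contains both $(x,y+1)$ and $(x+1,y)$. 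Independence of the two pairings then gives $\E N_n = \sum_{x,y} q_n(x,y)\,r_n(x,y)$, where $q_n(x,y)$ and $r_n(x,y)$ are the probabilities that a uniform NC pairing of $[2n]$ contains the disjoint short pair, resp.\ the nested pair.

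The two probabilities are evaluated by the counting argument behind the formula for $p_n(x)$. Deleting the four endpoints of two disjoint short arcs leaves an unobstructed NC pairing on $2n-4$ points, so $q_n(x,y)=C_{n-2}/C_n$ for every admissible $(x,y)$. For $r_n(x,y)$ one slices the line along the nested arcs: the $y-x-2$ points strictly between $x+1$ and $y$ must form an NC pairing on their own, and so must the $2n-y+x-2$ remaining points (those before $x$ and those after $y+1$, read in their natural linear order, which behave as a single interval because an arc is allowed to jump over the whole nest). Hence $y-x$ must be even; writing $y-x=2i+2$ gives $r_n(x,y)=C_i\,C_{n-2-i}/C_n$ with $i=(y-x-2)/2\in\{0,\dots,n-2\}$, and $r_n(x,y)=0$ otherwise. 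Grouping the double sum by $d=y-x$ (there are $2n-1-d$ admissible pairs with that difference) yields
\[
\E N_n=\frac{C_{n-2}}{C_n^{2}}\sum_{i=0}^{n-2}(2n-3-2i)\,C_i\,C_{n-2-i}.
\]

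The last sum is dispatched by the Catalan convolution $\sum_{i=0}^{m}C_iC_{m-i}=C_{m+1}$ together with its weighted form $\sum_{i=0}^{m}i\,C_iC_{m-i}=\tfrac m2 C_{m+1}$ (the latter obtained from the $i\leftrightarrow m-i$ symmetry); with $m=n-2$ this collapses everything to the closed form $\E N_n=(n-1)\,C_{n-1}C_{n-2}/C_n^{2}$. Since $C_{n-1}/C_n=(n+1)/(2(2n-1))\to 1/4$ and likewise $C_{n-2}/C_n\to 1/16$, we get $\E N_n\sim (n-1)/64\sim n/64$, hence $\E c_{st}(M_n)\sim n/32$. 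I expect the one genuinely delicate point to be the evaluation of $r_n(x,y)$: one must see that once the nested pair is fixed, the points before $x$ and after $y+1$ are jointly free and contribute a single factor $C_{n-2-i}$ (not a product of two Catalan numbers), and one must keep the parity condition and the index and boundary ranges — including the degenerate cases $i=0$, $i=n-2$, and small $n$ — exactly right. The Catalan-sum bookkeeping afterward is routine, and can be cross-checked against the small cases $n=2$ ($\E N_2=1/4$) and $n=3$ ($\E N_3=4/25$).
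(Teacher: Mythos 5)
Your argument is correct, and the setup coincides with the paper's: both proofs compute the first moment by summing, over admissible $(x,y)$, the product of the probability $C_{n-2}/C_n$ that the two short arcs sit in one pairing and the probability $C_{k-1}C_{n-k-1}/C_n$ (your $C_iC_{n-2-i}/C_n$ with $k=i+1$) that the two nested arcs sit in the other, with a factor $2$ for the two orientations. Where you genuinely diverge is in evaluating the resulting sum. The paper treats it asymptotically: it splits the range of the staple length into $k\le n^\eps$, $n^\eps<k\le n-n^\eps$, and $k>n-n^\eps$, shows the last two contribute $o(n)$, and evaluates the first via $C_{n-k-1}/C_n\to 4^{-(k+1)}$ and the generating-function identity $\sum_{l\ge 0}C_l4^{-l}=2$. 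You instead collapse the entire sum exactly using the Catalan convolution $\sum_{i=0}^{m}C_iC_{m-i}=C_{m+1}$ and its weighted form $\sum_{i=0}^{m}iC_iC_{m-i}=\tfrac m2C_{m+1}$, arriving at the closed form $\E c_{st}(M_n)=2(n-1)C_{n-1}C_{n-2}/C_n^{2}$, which is strictly more information than the stated asymptotic (and your checks at $n=2,3$ are right). The trade-off: your route is shorter and exact, but it works only because the weight $(2n-3-2i)$ is affine in $i$ so the convolution telescopes; the paper's cruder splitting technique is the one that transfers to the companion computation for rings (Proposition \ref{propoRings}), where the analogous sum $\sum_k\big(C_k4^{-k}\big)^2$ admits no such identity and the limit $2/\pi-1/2$ must be extracted asymptotically. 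Both are valid proofs of the proposition as stated.
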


For another example, all cycles of half-length 1 are rings: A \emph{ring} of radius $r$ is a cycle that consists of a pair $(x, y)$ present both in the upper and lower pairings, with $y - x = 2r + 1$. (A ringlet is a ring of radius $r = 0$.) 

The following result gives an estimate on the expected number of rings. It is proved in the appendix.
\begin{propo}
\label{propoRings}
Let $M_n$ be a random m.s. on $2n$ points and let $c_O(M_n)$ denote the number of rings in $M_n$.  Then, for $n \to \infty$, 
\bal{
\E c_O(M_n) \sim n \Big(\frac{2}{\pi} - \frac{1}{2}\Big).
}
\end{propo}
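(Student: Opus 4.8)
The plan for Proposition~\ref{propoRings} is to follow the proof of Theorem~\ref{theoNumberRinglets}. Let $J^u_{x,y}$ (resp.\ $J^l_{x,y}$) be the indicator that the upper (resp.\ lower) pairing of $M_n$ contains the arc $\{x,y\}$. A ring sits at positions $x<y$ precisely when both pairings contain $\{x,y\}$: the upper and lower arcs joining $x$ to $y$ then form a closed loop meeting no other marked point, hence a connected component. Thus $c_O(M_n)=\sum_{x<y}J^u_{x,y}J^l_{x,y}$, and since the two pairings are independent, $\E c_O(M_n)=\sum_{x<y}q_n(x,y)^2$, where $q_n(x,y)$ is the probability that a uniformly random non-crossing pairing of $[2n]$ contains $\{x,y\}$. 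Writing $y-x=2a+1$ with $0\le a\le n-1$, a chord joining $x$ and $y$ splits the $2n$ marked points into an inner block of $2a$ points and an outer block of $2b$ points, $b=n-1-a$ (the outer points, read cyclically, again form a linearly ordered block), so the non-crossing pairings using this chord are counted by $C_aC_b$ and $q_n(x,y)=C_aC_{n-1-a}/C_n=:q_n(a)$. There are $2n-2a-1$ pairs with $y-x=2a+1$, so $\E c_O(M_n)=\sum_{a=0}^{n-1}(2n-2a-1)\,q_n(a)^2$. Under the involution $a\mapsto n-1-a$ the value $q_n(a)$ is unchanged while $2n-2a-1$ becomes $2a+1$, so averaging the two labellings gives the clean identity $\E c_O(M_n)=n\sum_{a=0}^{n-1}q_n(a)^2$; moreover, by the Catalan recurrence $\sum_{a=0}^{n-1}C_aC_{n-1-a}=C_n$, the numbers $(q_n(a))_{a=0}^{n-1}$ form a probability distribution on $\{0,\dots,n-1\}$.

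The next step is to show $\sum_{a=0}^{n-1}q_n(a)^2\to 2\sum_{a\ge 0}r_a^2$, where $r_a:=C_a/4^{a+1}$. For fixed $a$, since $C_{m-1}/C_m=\frac{m+1}{4m-2}\to\frac14$, we get $q_n(a)=C_a\prod_{m=n-a}^{n}\frac{C_{m-1}}{C_m}\to C_a 4^{-a-1}=r_a$, and by symmetry $q_n(n-1-a)\to r_a$ as well; since $\sum_{a\ge0}r_a=\frac14\sum_{a\ge0}C_a4^{-a}=\frac14\cdot 2=\frac12$, the mass of $q_n$ escapes to the two endpoints in equal halves, leaving $o(1)$ mass in the middle. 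To take the limit inside the growing sum of squares I would prove a uniform bound $q_n(a)\le c\,(\min(a,n-1-a)+1)^{-3/2}$: for $a\le (n-1)/2$ combine $C_a\le c_1\,4^a(a+1)^{-3/2}$ with $C_b/C_n=4^{b-n}\prod_{m=b+1}^{n}\bigl(1+\tfrac{3}{2m-1}\bigr)\le c_0\,4^{b-n}(n/b)^{3/2}\le c_0'\,4^{-1-a}$ (using $b=n-1-a\ge(n-1)/2$), and invoke symmetry otherwise. This dominates $\sum_a q_n(a)^2$ by $2c^2\zeta(3)$ uniformly in $n$, so splitting the sum at $\lfloor n/2\rfloor$ and applying dominated convergence on each half (with the substitution $a=n-1-b$ on the upper half) gives $\sum_{a=0}^{n-1}q_n(a)^2\to 2\sum_{a\ge0}r_a^2$. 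This interchange of limit and infinite sum, i.e.\ the uniform tail estimate, is the only genuinely technical point; everything else is bookkeeping.

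It remains to evaluate $S:=\sum_{a\ge 0}r_a^2=\frac{1}{16}\sum_{a\ge0}C_a^2/16^a$. Here I would use the integral representation $C_a=\frac{4^{a+1}}{\pi}\int_0^{\pi/2}\sin^{2a}\theta\cos^2\theta\,d\theta$ to write $\sum_{a\ge0}C_a^2/16^a=\frac{16}{\pi^2}\int_0^{\pi/2}\!\!\int_0^{\pi/2}\frac{\cos^2\theta\cos^2\phi}{1-\sin^2\theta\sin^2\phi}\,d\theta\,d\phi$; the inner integral evaluates in closed form to $\frac{\pi}{2(1+\cos\theta)}$, and the elementary integral $\int_0^{\pi/2}\frac{\cos^2\theta}{1+\cos\theta}\,d\theta=2-\frac\pi2$ then gives $\sum_{a\ge0}C_a^2/16^a=\frac{16}{\pi}-4$. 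Hence $S=\frac1\pi-\frac14$ and $\E c_O(M_n)=n\sum_{a=0}^{n-1}q_n(a)^2\sim 2Sn=n\bigl(\frac2\pi-\frac12\bigr)$, as claimed. (Equivalently, $\sum_{a\ge0}C_a^2/16^a={}_3F_2(\tfrac12,\tfrac12,1;2,2;1)$, which admits a standard closed-form evaluation; the consistency check $\frac2\pi-\frac12\approx0.137>\frac18$ against the ringlet count is reassuring.)
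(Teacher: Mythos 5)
Your proof is correct and follows essentially the same route as the paper: both reduce $\E c_O(M_n)$ to $\sum_{a}(2n-2a-1)\bigl(C_aC_{n-1-a}/C_n\bigr)^2$ and then to $\tfrac{n}{8}\sum_{a\ge0}C_a^2\,16^{-a}$. The only differences are in execution: you justify the interchange of limit and sum via the symmetrization identity $\E c_O(M_n)=n\sum_a q_n(a)^2$ together with a uniform $(\min(a,n-1-a)+1)^{-3/2}$ bound and dominated convergence (where the paper argues by analogy with its staples computation), and you evaluate $\sum_{a\ge0}C_a^2\,16^{-a}=16/\pi-4$ directly from an integral representation (where the paper cites Lando--Zvonkin); both supplements check out.
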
  
Numerically, $\E c_O(M_n) \sim (n/8) \times 1.092958$, so the rings of radius $\geq 1$ (``non-ringlet'' rings) are responsible for only $\approx 9.3\%$ of all rings in a random meander system.

%We do not pursue this type of results further since it appears that it is difficult to obtain the central limit theorem for the total number of cycles in this way. The number of possible shapes proliferates very fast, the calculations of expectations become involved for shapes with non-trivial geometry, and the covariance structure between the numbers of differently shaped cycles is difficult to establish. 

Before proceeding further, let us introduce some additional definitions. 

The \emph{support} $(x_1 < x_2 < \ldots < x_{2k})$ of a cycle $\CC$ in a m.s. $M$ is the intersection of the cycle with the line $y = 0$. 

The \emph{half-length} of a cycle $\CC$ with support   $(x_1, \ldots, x_{2k})$ is the number $k$. 

Two cycles $\CC_1$ and $\CC_2$ are \emph{topologically equivalent} if there is an homeomorphism of the plane that maps $\CC_1$ to $\CC_2$ and maps the upper and lower half-planes to themselves. In this case we say that they have the same \emph{shape}.

For example, all rings are topologically equivalent. All cycles of half-length 2 are topologically equivalent to an upper or a lower staple. An upper and a lower staples have different shapes because  the reflection across the line $y = 0$ (or a rotation by the angle $\pi$ around a point on this line) does not preserve the upper and lower half-planes.

The shape of a cycle $\CC$ can be defined in purely combinatorial terms. If $(x_1 < x_2 < \ldots < x_{2k})$ is the support of $\CC$, then an oriented path along $\CC$ in the clockwise direction starting from $x_1$ gives a sequence in which the points of the support are visited. This sequence determines a permutation $\pi \in S_{2k}$: $(x_1 \to x_{\pi(1)} \to x_{\pi \circ \pi (1)} \to \ldots \to x_{\pi^{2k - 1}(1)} \to x_1)$.

This permutation is invariant under the homeomorphisms of the plane that preserve the upper and lower half-planes, and it determines the shape of the cycle. 

We will label the shape of a cycle by this permutation. 

For example, the shape of any ring is $\pi = (1 2)$, the shape of an upper staple is $(1 4 3 2)$, the shape of a lower staple is $(1 2 3 4)$. Note that by the connectedness of the cycle $\CC$, the permutation is a ``cycle'' permutation in $S_{2k}$, (where $k$ is the half-length of $\CC$). However, the non-crossing condition rules out some of the cycles in $S_{2k}$. For example, $(1324)$ is not possible. In fact, it is clear that the shape permutations can be put in a bijection with ``proper'' meanders on the set $[2k]$, that is, with meander systems that consist of only one connected component. 

   Let  $f_n \gtrsim g_n$ mean that there is a sequence $\eps_n \to 0$, such that $f_n \geq g_n (1  + \eps_n)$  for all $n$. Similarly  $f_n \lesssim g_n$ means that there is $\eps_n \to 0$ such that $f_n \leq g_n(1 + \eps_n)$ for all $n$. 

\begin{theo}
\label{theoLowerBound}
Let $c_{k,\pi}(M_n)$ be the number of cycles of half-length $k$ with  shape $\pi \in S_{2k}$ which occur in a (uniformly distributed) random m.s. $M_n$ on $2n$ points. 
Then, for every $k\geq 1$, 
\bal{
\E c_{k,\pi}(M_n) \gtrsim 2 n \times 16^{-k},
}
\end{theo}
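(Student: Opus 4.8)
The plan is to follow the ringlet computation in the proof of Theorem~\ref{theoNumberRinglets}, except that instead of counting all cycles of the given shape I count only those whose support is a block of $2k$ consecutive integers. This undercounting is harmless for a lower bound, and it is precisely what downgrades the ``$\sim$'' of the ringlet statement to the ``$\gtrsim$'' asserted here.

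Fix an admissible shape $\pi\in S_{2k}$ (one coming from a proper meander on $[2k]$; for any other $\pi$ we have $c_{k,\pi}\equiv 0$ and there is nothing to prove), and let $U_\pi$ and $L_\pi$ be its upper and lower non-crossing pairings of $[2k]$. For $1\le j\le 2n-2k+1$ let $A_j$ be the event that the upper pairing of $M_n$ contains all $k$ arcs obtained by placing $U_\pi$ on the consecutive block $B_j:=\{j,j+1,\dots,j+2k-1\}$, and let $A'_j$ be the analogous event for the lower pairing and $L_\pi$. On $A_j\cap A'_j$ every point of $B_j$ is paired, both above and below, with another point of $B_j$, so its connected component lies inside $B_j$; since $U_\pi,L_\pi$ come from a connected meander, this component is all of $B_j$ and its clockwise visiting permutation is exactly $\pi$. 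Thus $A_j\cap A'_j$ forces a cycle of half-length $k$ and shape $\pi$ with support $B_j$, and distinct $j$ give cycles with distinct supports (the smallest support point is $j$), hence distinct cycles; therefore $c_{k,\pi}(M_n)\ge \sum_{j=1}^{2n-2k+1}\1_{A_j\cap A'_j}$.

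Next I compute $\P(A_j)$. A non-crossing pairing of $[2n]$ lies in $A_j$ exactly when it pairs $B_j$ internally according to $U_\pi$ and pairs the remaining $2n-2k$ points (those at positions $1,\dots,j-1$ and $j+2k,\dots,2n$) among themselves, non-crossingly and compatibly with the block arcs. Because $B_j$ is an interval, none of its $U_\pi$-arcs encloses an outside point: all outside points lie in the single unbounded region of the upper half-plane, which joins the stretch left of $j$ to the stretch right of $j+2k-1$ over the top. Removing the block therefore puts the completions in bijection with the non-crossing pairings of a linearly ordered set of $2n-2k$ points, so their number is $C_{n-k}$, independently of $j$ and with no parity constraint on $j$. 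Hence $\P(A_j)=C_{n-k}/C_n$, and likewise $\P(A'_j)=C_{n-k}/C_n$. Since the upper and lower pairings are independent, $\P(A_j\cap A'_j)=(C_{n-k}/C_n)^2$, and summing,
\bal{
\E c_{k,\pi}(M_n)\;\ge\;(2n-2k+1)\Big(\frac{C_{n-k}}{C_n}\Big)^2 .
}

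Finally, iterating $C_{m-1}/C_m=(m+1)/\bigl(2(2m-1)\bigr)$ gives $C_{n-k}/C_n=\prod_{i=0}^{k-1}\frac{n-i+1}{2(2n-2i-1)}\to 4^{-k}$ as $n\to\infty$ with $k$ fixed, so the right-hand side above equals $2n\cdot 16^{-k}(1+o(1))$, which is the claim in the sense of ``$\gtrsim$'' defined before the theorem. I do not expect a genuine obstacle here; the only point that needs care is the bookkeeping of the previous paragraph — checking that imposing $U_\pi$ (resp.\ $L_\pi$) on a consecutive block produces a component that is exactly a half-length-$k$ cycle of shape exactly $\pi$, and that all leftover points sit in one region so the completion count is the clean value $C_{n-k}$ with no parity restriction on $j$.
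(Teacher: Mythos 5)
Your proposal is correct and follows essentially the same route as the paper: the paper also restricts to cluster cycles (cycles whose support is a block of $2k$ consecutive points), computes the containment probability as $(C_{n-k}/C_n)^2$ by the same residual-pairing count, and sums over the $2n-2k+1$ positions to get the $2n\cdot 16^{-k}$ asymptotic. Your additional verification that imposing $U_\pi$ and $L_\pi$ on a consecutive block yields a component of exactly shape $\pi$ is a point the paper leaves implicit, but the argument is the same.
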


In the proof we use the concept of a cluster cycle. 

\begin{defi}
\label{defiClusterCycle} A \emph{cluster} cycle of half-length $k$ is a cycle of half-length $k$ with support $(x + 1, x +2, \ldots, x+2k)$ for some $x \in \{0, \ldots, 2n - 2k\}$.
\end{defi}
  In other words, a cluster cycle has no gaps in its support. For example, a ringlet is a cluster cycle, while all other rings with radius $k \geq 1$ are not cluster cycles.

\begin{proof}[Proof of Theorem \ref{theoLowerBound}]
%Consider a cycle $\CC_x$ with the given shape $\pi$ supported on $(x+1, x + 2, \ldots, x + 2k)$ where $0 \leq x \leq 2n - 2k$. 
Instead of counting all cycles of half-length $k$ that have shape $\pi$, we will compute the expected number of all cluster cycles with these half-length and shape. Call this number $c_{k,\pi}^{(cl)}(M_n)$. Let $\CC_{x,\pi}$ be the cluster cycle with shape $\pi$ supported on $(x+1, x + 2, \ldots, x + 2k)$ where $0 \leq x \leq 2n - 2k$. Let $I_{x,\pi}$ be the indicator of the event that a random m.s. contains cycle $\CC_{x,\pi}$. Then, we have $\E c_{k,\pi}(M_n) \geq \E c_{k,\pi}^{(cl)}(M_n)$, and 
\begin{eqnarray}
\label{asymptotic}
 \E c_{k,\pi}^{(cl)}(M_n)=  \sum_{x = 0}^{2n - 2k} \E(I_{x,\pi}) 
\\
\notag
= \sum_{x = 0}^{2n - 2k} \Big(\frac{C_{n - k}}{C_n}\Big)^2
= (2n - 2k + 1) \Big(\frac{C_{n - k}}{C_n}\Big)^2 \sim 2n 16^{-k}.
\end{eqnarray}
where the second equality follows because $\E(I_{x,\pi})$ is the probability that the random m.s. contains $\CC_{x,\pi}$ and this probability is $C_{n - k}^2/C_n^2$. Indeed,  there are exactly $C_{n - k}^2$ possible ways to arrange the upper and lower non-crossing pairings of the residual set $\{1, \ldots, x,  x+2k +1, \ldots, 2 n\}$ and every of these pairings remains non-crossing if we add the pairings from the cycle $\CC_{x,\pi}$. (Here we use the fact that $\CC_{x,\pi}$ is a cluster cycle.) Therefore, the probability is  $C_{n - k}^2/C_n^2$.

The asymptotic (\ref{asymptotic}) for $c_{k,\pi}^{(cl)}(M_n)$  implies the statement of the theorem. 
\end{proof}

We have seen that the count of rings is dominated by ringlets. So, in general we can ask the question if the count of cycles is dominated by cluster cycles. We can use the asymptotic in the proof of Theorem \ref{theoLowerBound} to obtain an upper bound on the expected number of cluster cycles. 

The number of possible shapes of a cycle with the half-length $2k$ equals the number of ``proper'' meanders, that is, the meander systems with only one cycle. Let $R_k$ denote the number of proper meanders on the set $[2k]$. (We use the notation $R_k$ instead of more traditional $M_k$ to avoid the confusion with our notation for meander systems.) For these numbers, it is known that they are super-additive: $R_k R_l \leq R_{k + l}$. By Fekete's lemma, this implies the existence of the limit 
\bal{
R = \lim_{k \to \infty} (R_k)^{1/k} = \sup_{k > 1} (R_k)^{1/k}.
}
In particular, by the second equality, 
\begin{equation}
\label{upperboundRk}
R_k \leq R^k, \text{ for all } k\geq 1.
\end{equation}
 In \cite{lando_zvonkin92}, Lando and Zvonkin calculated the growth rate for a related class of irreducible meander systems and conjectured that for proper meanders, $R = 12.26 \ldots$.   In \cite{dgg2000}, it is conjectured that   $R_k \sim c k^{-\alpha} R^k$ with an explicit formula for $\alpha > 0$. Theoretical bounds for $R$ have been given  in \cite{lando_zvonkin92} and \cite{albert_paterson2005}.  In particular, the lower and upper bounds  $11.380 \leq R \leq 12.901\ldots$ are given in \cite{albert_paterson2005}.
 
 With respect to numerical evidence, Table 1 in \cite{lando_zvonkin93} lists meander numbers up to $R_{14} = 61606881612$ (with a reference to calculations by Reeds and Shepp). These numbers are confirmed in \cite{franz_earnshaw2002}. In \cite{jensen2000} and \cite{jensen_guttmann2000}, the calculations have been extended to $R_{24}$ by using a different method. Table 1 in \cite{jensen_guttmann2000} gives $R_{24} = 794337831754564188184$.

\begin{figure}[htbp]
\centering
              \includegraphics[width= 0.8 \textwidth]{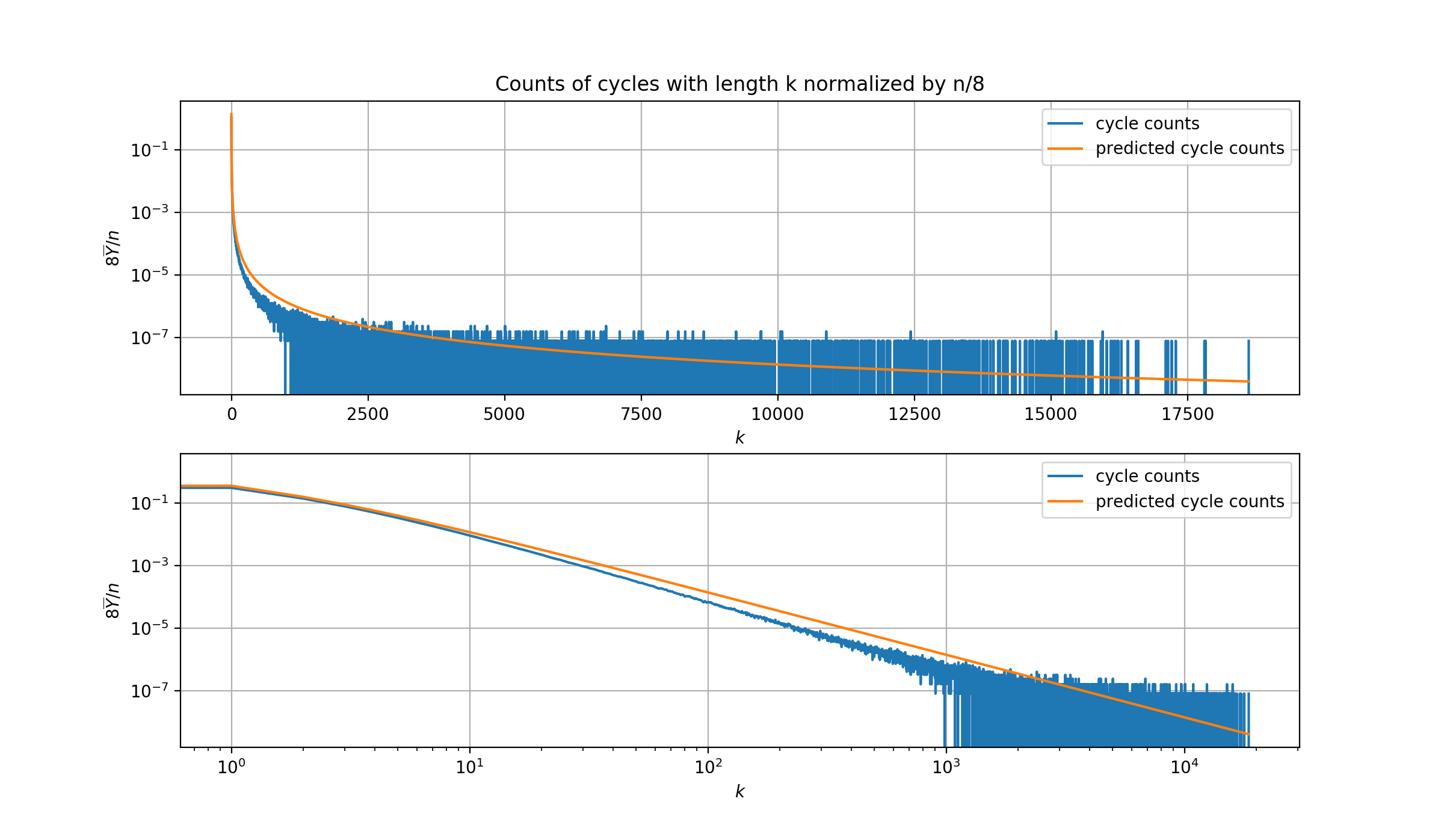}
              \caption{This plot shows $8\overline{Y_k}/n$, where $\overline Y_k$ is the average number of cycles of half-length $k$ in a random meander of half-length $n = 10^5$. The averaging is done over a sample of $1000$ random meanders. For the prediction curve, $8\ovln{Y_k}/n$ is set equal to $k^{-2}$. The upper and lower plots have linear and logarithmic scales along the $x$-axis, respectively. }
              \label{figDynamicSpectrumA}
\end{figure}

%\begin{figure}[htbp]
%\centering
%              \includegraphics[width= 0.4 \textwidth]{DynamicSpectrumB.png}
%              \caption{This plot shows $f_k k^2$, where $f_k = 8\ovln{ c_k(M_n)} / n$. The averaging is done over a sample of $100,000$ independent uniformly distributed meander systems of size $n = 500$. The horizontal axis shows $1 \leq k \leq n = 500$.}
%              \label{figDynamicSpectrumB}
%\end{figure}

\begin{propo}
Let $c_{k}^{(cl)}(M_n)$ be the number of cluster cycles of half-length $k$ in a random m.s. $M_n$ on $2n$ points. 
Then,  for every $k > 0$, 
\bal{
\E c_{k}^{(cl)}(M_n) \lesssim 2 n \gamma^{-k},
}
where $\gamma = 1.2402\ldots$.
\end{propo}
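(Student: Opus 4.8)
The plan is to upgrade the computation already carried out in the proof of Theorem~\ref{theoLowerBound} into an exact identity for $\E c_k^{(cl)}(M_n)$, and then to feed in the known upper bound on the meander growth rate $R$. First I would decompose $c_k^{(cl)}(M_n)$ over the position and the shape of a cluster cycle: a cluster cycle of half-length $k$ has support $(x+1,\dots,x+2k)$ for some $x\in\{0,\dots,2n-2k\}$ and its shape is a proper meander $\pi$ on $[2k]$, so with $I_{x,\pi}$ the indicator that $M_n$ contains the cluster cycle $\CC_{x,\pi}$ we have $c_k^{(cl)}(M_n)=\sum_{x=0}^{2n-2k}\sum_{\pi}I_{x,\pi}$, the inner sum running over the $R_k$ admissible shapes. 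Since the residual-pairing argument of Theorem~\ref{theoLowerBound} gives $\E(I_{x,\pi})=(C_{n-k}/C_n)^2$ for every $x$ and $\pi$, linearity of expectation yields
\bal{
\E c_k^{(cl)}(M_n) = (2n-2k+1)\,R_k\,\Big(\frac{C_{n-k}}{C_n}\Big)^2.
}

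Next I would evaluate the asymptotics for fixed $k$ as $n\to\infty$. Iterating the ratio $C_{m-1}/C_m=(m+1)/(2(2m-1))$ computed in the proof of Theorem~\ref{theoNumberRinglets},
\bal{
\frac{C_{n-k}}{C_n} = \prod_{j=0}^{k-1}\frac{C_{n-j-1}}{C_{n-j}} = \prod_{j=0}^{k-1}\frac{n-j+1}{2(2n-2j-1)} \longrightarrow 4^{-k},
}
so that $(2n-2k+1)(C_{n-k}/C_n)^2 = 2n\cdot 16^{-k}(1+\eps_n)$ for some $\eps_n=\eps_n(k)\to 0$, and hence $\E c_k^{(cl)}(M_n)\sim 2n\,R_k\,16^{-k}$. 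Finally, combining the super-additivity bound $R_k\le R^k$ from (\ref{upperboundRk}) with the upper bound $R\le 12.901\ldots$ of Albert and Paterson~\cite{albert_paterson2005} gives $R_k\,16^{-k}\le (R/16)^k\le\gamma^{-k}$ with $\gamma:=16/12.901\ldots=1.2402\ldots$, so that $\E c_k^{(cl)}(M_n)\le 2n\gamma^{-k}(1+\eps_n)$, i.e. $\E c_k^{(cl)}(M_n)\lesssim 2n\gamma^{-k}$.

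There is no serious obstacle here: the substance is the exact formula (already implicit in the proof of Theorem~\ref{theoLowerBound}) together with the imported numerical bound on $R$. The only points needing care are the bookkeeping behind $\lesssim$ (one quantifies over $n\to\infty$ with $k$ fixed, so $\eps_n$ may depend on $k$) and checking that $R_k\le R^k$ is used in the direction that makes $\gamma^{-k}$ an upper bound. It is worth noting that the estimate is not expected to be sharp: the bound $R_k\le R^k$ is loose (conjecturally $R_k\sim c\,k^{-\alpha}R^k$ with $R\approx 12.26$, which would give $\gamma\approx 1.305$), and replacing all cycles by cluster cycles only goes one way, so the proposition does not by itself decide whether cluster cycles dominate the cycle count.
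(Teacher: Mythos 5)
Your proposal is correct and follows essentially the same route as the paper: decompose over the $R_k$ shapes, use the cluster-cycle probability $(C_{n-k}/C_n)^2$ from the proof of Theorem~\ref{theoLowerBound} to get $\E c_k^{(cl)}(M_n)\sim 2nR_k16^{-k}$ for fixed $k$, and then apply $R_k\le R^k$ with the Albert--Paterson bound $R\le 12.901\ldots$. The only difference is that you make explicit the telescoping evaluation of $C_{n-k}/C_n$ and the dependence of the error term on $k$, which the paper leaves implicit by citing the asymptotic (\ref{asymptotic}).
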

\begin{proof}
We have 
\bal{
c_{k}^{(cl)}(M_n) = \sum_\pi c_{k,\pi}^{(cl)}(M_n).
}
The number of shapes of half-length $k$ is equal $R_k$, and for every fixed $k \geq 1$ we use asymptotic (\ref{asymptotic}) in order to find that 
\bal{
\E c_{k}^{(cl)}(M_n) \sim  R_k 2n 16^{-k}.
}
By using the exact upper bound on the number of meanders $R_k$ (\ref{upperboundRk}), we have
that for every fixed $k\geq 1$, 
\bal{
\E c_{k}^{(cl)}(M_n) \lesssim 2 n \Big(\frac{16}{12.901}\Big)^{-k} = 2 n (1.2402...)^{-k}.
}
\end{proof}

From numerics, however, it appears that for a wide range of $k$, the function $\E c_{k}(M_n)$ (the expected number of cycles of half-length $k$) does not exhibit exponential decline in $k$. Instead, it appears that for large $n$ and slowly growing $k_n \ll n$, this function can be more accurately described as $ c n k_n ^ {-\beta}$ where $\beta \approx 2$. (For larger values of $k$ it appears that $\beta$ becomes smaller but the decline is still polynomial, not exponential.) This suggests that for large $k$ non-cluster cycles play a significant role.  See Figure \ref{figDynamicSpectrumA} for an illustration. In this illustration $n = 10^5$ and the largest cycle is expected to be of order $10^4$. 

% and \ref{figDynamicSpectrumB}.

%To put it another way, it remains open what is the character of the decline in the number of cycles of half-length $k$ when $k \ll n$ and both $k$ and $n \to \infty$. 

\subsection{Simply generated random meander systems}

In this section we describe an extension of Theorem \ref{theoNumberRinglets} to a more general class of distributions on meander systems. The results are similar to the case of the uniform distribution and are not used later. While we provide all necessary definitions, the reader unfamiliar with simply generated trees can safely skip this section.

In order to define simply generated m.s., we recall a bijection between NC pairings on $2n$ points and the set of rooted planar trees on $n + 1$ vertices.

 We denote this bijection as $\beta: \TT_{n + 1} \to \PP_n$,  where $\TT_{n + 1}$ is the set of all rooted planar trees on $n+1$ vertices, and $\PP_n$ is the set of non-crossing pairings of $[2n]$. 
 
  \begin{wrapfigure}{r}{.4 \textwidth}
              \includegraphics[width=  \linewidth]{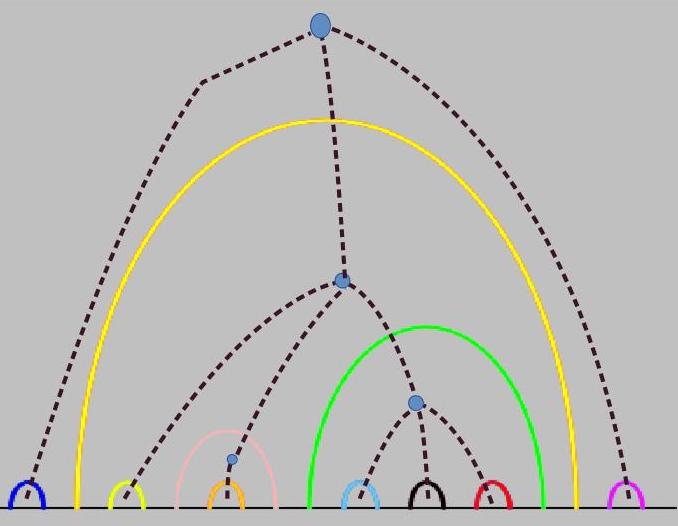}
              \caption{A bijection between NC pairings and planar trees.}
              \label{figBijectionPairingTree}
\end{wrapfigure}

 The bijection is illustrated in Figure \ref{figBijectionPairingTree}.\footnote{Formally, a tree is mapped to a Dyck walk path and then the path is mapped to an NC pairing. Both maps are standard, see, for example, note I.48 on p.77 in \cite{flajolet_sedgewick2009}.} It preserves the internal structure of pairings and trees in the sense that  a pair in a pairing corresponds to an edge of the corresponding tree.  (The edge crosses the arc of the corresponding pair.) If we identify the tree edge $(u, v)$ from parent $u$ to its child $v$ with vertex $v$, then we obtain a bijection between pairs of a pairing and non-root vertices of the corresponding planar tree.  In particular,  every pair $(x, x+1)$ corresponds to a tree leaf. More generally, a pair $(x, x + 2k + 1)$ corresponds to a vertex of out-degree  $k$.

Now, recall the definition of  simply-generated random rooted planar trees $\TT_n$ on $n$ vertices. Let $w_0 > 0, w_1, w_2, \ldots $ be a sequence of non-negative numbers (``weights''). Define the weight of a tree $T$ as 
\bal{
w(T) = \prod_{d = 0}^\infty w_d^{N_d(T)},
}
where $N_d(T)$ denote the number of vertices of out-degree $d$ in the tree $T$.

A \emph{simply-generated random tree} with weight sequence $w = (w_0, w_1, w_2, \ldots )$ is a tree in $\TT_n$ chosen at random with probability $w(T)/Z_n$, where $Z_n = \sum_{T\in \TT_n} w(T)$.

The uniform distribution on planar random trees corresponds to the weight sequence $w_i = 1$ for all $i$ (and to equivalent weight sequences). Simply generated trees can be alternatively defined in terms of Galton-Watson trees conditioned to have a fixed number of vertices. See \cite{janson2012} for an overview of these topics.

\begin{defi}
Let $\vec w = (w_0>0, w_1, w_2, \ldots)$ and $\vec{w}' = (w'_0>0, w'_1, w'_2, \ldots)$ be two sequences of non-negative weights, and let $(T_n, T'_n)$ be two independent simply generated trees on $n + 1$ vertices with weight sequences $\vec w$ and $\vec w'$, respectively. Then, a \emph{simply generated meander system} on 2n points is the pair $(P_n, P'_n)$, where $P_n$ and $P'_n$ are non-crossing partitions of $[2n]$ obtained by the bijection $\beta$ from the trees $T_n$ and $T'_n$, respectively. 
\end{defi}

We call the simply generated meander system \emph{symmetric} if the weight sequences for the upper and lower pairings are the same,  $\vec w' = \vec{w}$.

For  example, a random meander system with the uniform distribution is a symmetric simply generated m.s. with weights  $\vec  w' = \vec w = (1, 1, 1, \ldots)$. 

 For a weight sequence $(w_0, w_1, \ldots)$, let $\theta(t) := \sum_{k=0}^\infty w_k t^k$, and let $\rho$ be the radius of convergence for this series. Define   
$\psi(t) := \frac{t \theta'(t)}{\theta(t)}$. 
One can check that $\psi(t)$ is a non-decreasing function on $[0,\rho)$. Define 
\bal{
\tau =
\begin{cases}
\inf \{t: \psi(t) = 1\}, &\text{ if }\sup_{t \uparrow \rho} \psi(t) \geq 1,
\\
\rho, & \text{ otherwise.}
\end{cases}
}

\begin{theo}
\label{theoRinglets}
Let $M_n$ be a symmetric simply generated meander system on $2n$ points, and let $c(M_n)$ denote the number of connected components of $M_n$. Then, 
\bal{
\E \, c(M_n) \gtrsim \frac{1}{2}\Big(\frac{w_0}{\theta(\tau)}\Big)^2 n.
}
\end{theo}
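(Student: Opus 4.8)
The plan is to mirror the proof of Theorem~\ref{theoNumberRinglets}: bound $c(M_n)$ from below by the number of ringlets, and estimate the expected number of ringlets in the simply generated model. Since every ringlet $\mathfrak{O}_x$ is itself a connected component, $c(M_n)\ge c_o(M_n)$, so it suffices to prove $\E c_o(M_n)\gtrsim \tfrac12\big(w_0/\theta(\tau)\big)^2 n$, where $c_o(M_n)$ counts ringlets.

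\textbf{Step 1 (reduction to a leaf count).} Writing $I_x^u,I_x^l$ for the indicators that the short pair $(x,x+1)$ lies in the upper, resp. lower, NC pairing, we have $c_o(M_n)=\sum_{x=1}^{2n-1} I_x^u I_x^l$. The upper and lower pairings are independent and, in the symmetric case, identically distributed, so with $p_n(x):=\P\big((x,x+1)\in P_n\big)$ we get $\E c_o(M_n)=\sum_{x=1}^{2n-1} p_n(x)^2$. Unlike the uniform case, $p_n(x)$ now depends on $x$, so I cannot evaluate the sum directly; instead I apply the Cauchy--Schwarz (power-mean) inequality to the $2n-1$ numbers $p_n(x)$:
\[
\sum_{x=1}^{2n-1} p_n(x)^2 \;\ge\; \frac{1}{2n-1}\Big(\sum_{x=1}^{2n-1} p_n(x)\Big)^2 .
\]
The key observation is that $\sum_{x=1}^{2n-1} p_n(x)=\E\big(\#\{x:(x,x+1)\in P_n\}\big)$ is the expected number of short arcs in $P_n$, and under the bijection $\beta$ a short arc corresponds precisely to a leaf of the associated planar tree $T_{n+1}$ on $n+1$ vertices. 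Hence $\sum_x p_n(x)=\E N_0(T_{n+1})$, the expected number of out-degree-$0$ vertices in the simply generated tree.

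\textbf{Step 2 (asymptotics of the leaf count and conclusion).} I then invoke the standard first-order asymptotics for the degree profile of simply generated trees (see the survey \cite{janson2012}): the fraction of vertices of each fixed out-degree converges, and in particular $\E N_0(T_{n+1})\sim \tfrac{w_0}{\theta(\tau)}\,n$, with $\tau$ as defined before the theorem (this covers both the generic case $\psi(\tau)=1$ and the condensation case $\tau=\rho$). Plugging this into the Cauchy--Schwarz bound gives
\[
\E c_o(M_n)\;\ge\;\frac{\big(\E N_0(T_{n+1})\big)^2}{2n-1}\;=\;\frac12\Big(\frac{w_0}{\theta(\tau)}\Big)^2 n\,(1+o(1)),
\]
which is the asserted lower bound. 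As a sanity check, the weights $w_i\equiv 1$ give $\theta(t)=(1-t)^{-1}$ and $\psi(t)=t/(1-t)$, hence $\tau=1/2$ and $\theta(\tau)=2$, so the bound reads $\E c(M_n)\gtrsim n/8$, consistent with Theorem~\ref{theoNumberRinglets}.

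\textbf{Main obstacle.} The only non-routine ingredient is Step~2: extracting the correct first-order constant $w_0/\theta(\tau)$ for the expected number of leaves, with enough uniformity to conclude the $\sim$. In the generic (finite-offspring-variance) regime this is classical; the delicate point is the condensation regime $\tau=\rho<\infty$ with $\psi(\rho)<1$, where one must check that the single ``giant'' vertex does not perturb the leaf count at order $n$ and that the local description of the tree still yields leaf density $w_0/\theta(\rho)$. Quoting the appropriate statement from \cite{janson2012} settles this. A minor bookkeeping point is that $\beta$ maps $\TT_{n+1}$ to $\PP_n$, so the tree has $n+1$ vertices while the sum runs over $2n-1$ positions; this discrepancy only affects lower-order terms.
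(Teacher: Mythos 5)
Your proposal is correct and follows essentially the same route as the paper's proof: reduce to counting ringlets, write $\E c_o(M_n)=\sum_x p_n(x)^2$, apply Cauchy--Schwarz, identify $\sum_x p_n(x)$ with the expected leaf count of the associated simply generated tree, and invoke Janson's asymptotics (the paper cites Theorem 7.11 of \cite{janson2012} together with dominated convergence to pass from convergence in probability of $N_0(T_n)/n$ to convergence of its expectation). No substantive differences.
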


\textbf{Remark:} For non-symmetric meander systems the result of Theorem \ref{theoRinglets} is also likely to hold, with a suitable modification on the coefficient before $n$. However, in order to use our method, one needs the asymptotic equidistribution property of the pairs $(x, x + 1)$ in interval $[1, 2n]$, for either lower or upper NC pairing, and this property is not obvious in the general, non-uniform case.  

% (One exception is when either upper or lower pairing has the uniform distribution.)

\begin{proof}[Proof of Theorem \ref{theoRinglets}]
Let $c_o(M_n)$ denote the number of ringlets in the meander system $M_n$. Obviously, $c(M_n) \geq c_o(M_n)$ and it is enough to give a lower bound for $\E\, c_o(M)$. 

As in the proof of Theorem \ref{theoNumberRinglets},
\bal{
\E c_o(M_n) &=  \sum_{x = 1}^{2n - 1} p_n(x)^2, 
}
where $p_n(x)$ is the probability that $(x, x+1)$ is in a given random NC pairing of $[2n]$.

%A pair $(x, x+1)$ in a non-crossing pairing correspond to a tree leaf under the bijection $\beta$ and the probability $p_n(x)$ that $(x, x+1)$ is a pair equals the probability that a specific non-root vertex is a leaf. 

The sum  $\sum_{x = 1}^{2n - 1} p_n(x)$ equals the expected number of pairs of the form $(x, x + 1)$ in a random pairing. The pairs of this form are mapped to leaves of the corresponding tree by the bijection $\beta^{-1}$ . Hence, $\sum_{x = 1}^{2n - 1} p_n(x) = \E N_0(T_n) $, where $N_0(T_n)$ denotes the number of leaves in a tree $T_n$.   

By Theorem 7.11 in \cite{janson2012}, we have that $N_0(T_n)/n$ converges in probability to $\pi_0 = w_0/\theta(\tau)$, which by dominated convergence implies that $\E\big[N_0(T_n)/n\big] \to \pi_0$, that is, $\E N_0(T_n) = \sum_{x = 1}^{2n - 1} p_n(x) \sim \pi_0 n$.

Then, by the Cauchy-Schwarz inequality, 
\bal{
(2n - 1)\sum_{x = 1}^{2n - 1} p_n(x)^2 \geq \Big(\sum_{x = 1}^{2n - 1} p_n(x)\Big)^2 = (\E N_0(T_n))^2 \sim \Big(\frac{w_0 n}{\theta(\tau)}\Big)^2. 
}
This implies that
\bal{
\E c_o(M_n) \gtrsim \frac{1}{2}\Big(\frac{w_0}{\theta(\tau)}\Big)^2 n. 
}
\end{proof}

%%%%%%%%%%%%%%%%%%%%%%%%%%
%Number of cycles in random rainbow meander system
%
%%%%%%%%%%%%%%%%%%%%%%%%%%%

\subsection{Random rainbow m.s.}

%%%%%%%%%%%%%%%%%%%%%%%%%%%%%%
%Rainbow meander systems
%
%%%%%%%%%%%%%%%%%%%%%%%%%%%%%%

There is another method to introduce a probability measure on a \emph{subset} of  meander systems, which have recently appeared in the literature. 
A \emph{rainbow pairing} of the set $[2m]$ is the pairing $P_1 (m) = \{(1, 2m), (2, 2m - 1), \ldots, (m, m + 1)\}$. Its geometric representation consists of $m$ arcs that look like a rainbow. A shifted rainbow pairing is a pairing of the set $\{k, k +1, \ldots, 2m + k - 1\}$, for some $k \in \mathbb{N}$, which is defined as $P_k (m) = \{(k, 2m + k - 1), (k + 1, 2m +k - 2), \ldots, (m + k - 1, m + k)\}$.

\begin{wrapfigure}{l}{.4 \textwidth}
              \includegraphics[width= \linewidth]{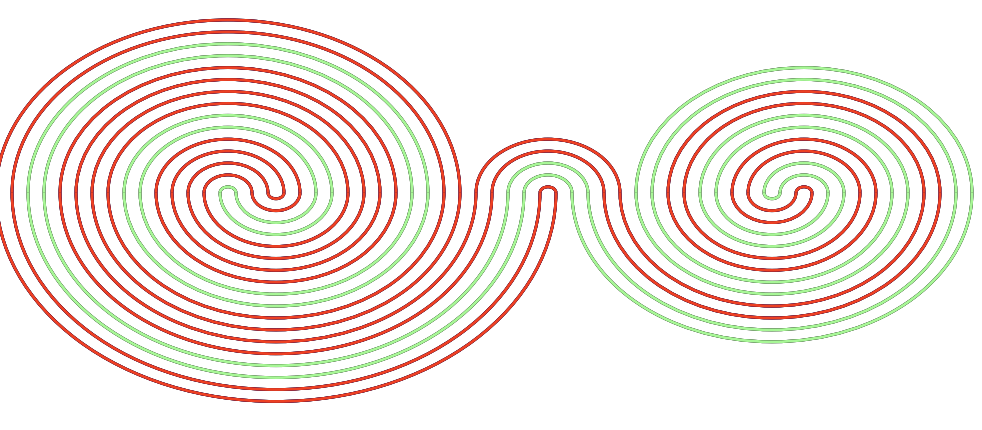}
              \caption{A rainbow meander system with parameters  $\alpha = [15, 5, 11]$ and
    $\beta = [18, 13]$.}
              \label{figRainbowMeander}
\end{wrapfigure}

A \emph{rainbow meander system} \\ $M_n(\alpha_1, \ldots, \alpha_s | \beta_1, \ldots, \beta_t)$ has the upper and lower pairings which are  unions of shifted rainbow pairings,  $P_u = \cup_{i=1}^s P_{r_i}(\alpha_i)$, where $r_i = 1 + 2\sum_{j = 1}^{i - 1} \alpha_j$, and $P_l = \cup_{i=1}^t P_{r'_i}(\beta_i)$, where $r'_i = 1 + 2\sum_{j = 1}^{i - 1} \beta_j$. It is required that $\sum_{i = 1}^s \alpha_i = \sum_{i = 1}^t \beta_i = n$. See illustration in Figure  \ref{figRainbowMeander}.

A \emph{random rainbow meander system} of type $(s, t)$ on  the set $[2n]$  is a rainbow m.s. $M_n(\vec \alpha| \vec\beta)$, where integer vectors $\vec\alpha = (\alpha_1, \ldots, \alpha_s)$ and $\vec\beta = (\beta_1, \ldots, \beta_t)$ are taken uniformly and independently in the simplices  $\sum_{i = 1}^s \alpha = n$, $\alpha_i \geq 1$ and $\sum_{i = 1}^t \beta_i = n$, $\beta_i \geq 1$, respectively. 

The constraint $\sum_{i = 1}^s \alpha_i = n$ implies that random variables $\alpha_i$ are not independent. A simpler random system relaxes this constraint by allowing a random number of points $2n$.  Namely, a \emph{relaxed rainbow meander system} of type $(s, 1)$ and size $N$ is a rainbow m.s. $M_N(\vec \alpha | \beta)$  where coordinates of the vector $\vec \alpha = (\alpha_1, \ldots, \alpha_s)$ are taken independently and uniformly in the set $[N]$, and $\beta = \sum_{i = 1}^s \alpha_s$. 

Explicit formulas for the number of cycles in a rainbow meander system are known only for meander systems of type $(2, 1)$ and $(3, 1)$, that is, only for meander systems that have one rainbow at the bottom and two or three rainbows at the top. In those cases, they are
\begin{equation}
\label{gcd2}
c(M) = \gcd(\alpha_1, \alpha_2),
\end{equation}
 and 
 \begin{equation}
 \label{gcd3}
c(M) = \gcd(\alpha_1 + \alpha_2, \alpha_2 + \alpha_3),
\end{equation}
 respectively. (See \cite{fiedler_castaneda2012}.)
 
 \begin{theo}
 \label{theoRainbowMeanders}
 Let $M_{2,N}$ be a relaxed random rainbow meander of type $(2,1)$ and size $N$. Then for the number of cycles $c(M_{2,N})$, the following equations hold:
 \bal{
 \P\Big(c(M_{2,N}) = x\Big) &= \frac{6}{\pi^2} \frac{1}{x^2} + O\Big(\frac{\log (N/x)}{Nx}\Big),
 \\
 \E\Big(c(M_{2,N})\Big) &= \frac{6}{\pi^2} \log N + C + O\Big(\frac{\log (N)}{\sqrt{N}}\Big),
 \\
 \E\Big(c(M_{2,N})^2\Big) &= \frac{N}{3} \Big(\frac{2 \zeta(2)}{\zeta(3)}  - 1\Big)+ O(\log N),
 }
 where $\zeta(z)$ is Riemann's zeta function. 
 \end{theo}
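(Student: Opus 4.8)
The plan is to reduce the statement to the distribution of the greatest common divisor of two independent uniform integers, and then run standard analytic number theory. By formula~(\ref{gcd2}), a relaxed rainbow m.s.\ of type $(2,1)$ and size $N$ satisfies $c(M_{2,N})=\gcd(\alpha_1,\alpha_2)$, where $\alpha_1,\alpha_2$ are independent and uniform on $[N]=\{1,\dots,N\}$. Hence
\[
\P\big(c(M_{2,N})=x\big)=\frac{1}{N^2}\,\#\{(a,b)\in[N]^2:\gcd(a,b)=x\},\qquad
\E\big(c(M_{2,N})^r\big)=\frac{1}{N^2}\sum_{a,b\le N}\gcd(a,b)^r,
\]
so all three assertions are contained in estimates for the counting function $\sum_{a,b\le N}\gcd(a,b)^r$ with $r=1,2$ (and, in refined form, $r=0$).

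For the point probabilities I would start from $Q(m):=\#\{(a,b)\in[m]^2:\gcd(a,b)=1\}=\sum_{d\le m}\mu(d)\lfloor m/d\rfloor^2$. Using $\lfloor m/d\rfloor^2=(m/d)^2+O(m/d)$ together with $\sum_{d\le m}\mu(d)/d^2=1/\zeta(2)+O(1/m)$ and $\sum_{d\le m}1/d=O(\log 2m)$ gives the classical estimate $Q(m)=m^2/\zeta(2)+O(m\log 2m)$. Since the pairs with $\gcd$ exactly $x$ biject with the coprime pairs in $[\lfloor N/x\rfloor]^2$, we have $\P\big(c(M_{2,N})=x\big)=Q(\lfloor N/x\rfloor)/N^2$; expanding $\lfloor N/x\rfloor=N/x+O(1)$ and using $\zeta(2)=\pi^2/6$ yields $\P\big(c(M_{2,N})=x\big)=\tfrac{6}{\pi^2 x^2}+O\!\big(\tfrac{\log(N/x)}{Nx}\big)$ (for $x$ close to $N$ the left side is exactly $1/N^2$, which the bound also covers).

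For the moments I would use the identity $n^r=\sum_{d\mid n}J_r(d)$ for the Jordan totient $J_r=\mathrm{id}^r*\mu$ (so $J_1=\varphi$), giving $\sum_{a,b\le N}\gcd(a,b)^r=\sum_{d\le N}J_r(d)\lfloor N/d\rfloor^2$. Writing $J_r(d)=\sum_{e\mid d}\mu(e)(d/e)^r$, putting $d=ef$, and using $\lfloor N/(ef)\rfloor=\lfloor\lfloor N/e\rfloor/f\rfloor$, this becomes $\sum_{e\le N}\mu(e)\,S_r(\lfloor N/e\rfloor)$ with $S_r(m):=\sum_{f\le m}f^r\lfloor m/f\rfloor^2$. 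The crucial step is to evaluate $S_r$: since $\lfloor m/f\rfloor^2=\#\{(k,l):f\max(k,l)\le m\}$, summing over $f$ first gives $S_r(m)=\sum_{k,l\le m}\sum_{f\le m/\max(k,l)}f^r$, and then $\sum_{f\le t}f^r=\tfrac{t^{r+1}}{r+1}+O(t^r)$ reduces the main term to $\tfrac{m^{r+1}}{r+1}\sum_{k,l\le m}\max(k,l)^{-(r+1)}$. For $r=2$ the series $\sum_{k,l\ge 1}\max(k,l)^{-3}=\sum_{M\ge1}(2M-1)M^{-3}=2\zeta(2)-\zeta(3)$ converges, so $S_2(m)=\tfrac{2\zeta(2)-\zeta(3)}{3}m^3+O(m^2\log m)$; inserting this into $\sum_{e\le N}\mu(e)S_2(\lfloor N/e\rfloor)$ and using $\sum_e\mu(e)/e^3=1/\zeta(3)$ gives $\sum_{a,b\le N}\gcd(a,b)^2=\tfrac{2\zeta(2)-\zeta(3)}{3\zeta(3)}N^3+O(N^2\log N)$, i.e.\ $\E\big(c(M_{2,N})^2\big)=\tfrac{N}{3}\big(\tfrac{2\zeta(2)}{\zeta(3)}-1\big)+O(\log N)$.

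The case $r=1$ is the main obstacle, and it produces the $\log N$ in the mean: now $\sum_{k,l\le m}\max(k,l)^{-2}=2\log m+O(1)$ diverges, and the naïve bound $\lfloor N/d\rfloor^2=(N/d)^2+O(N/d)$ is useless because, weighted by $\varphi(d)$, its error is of the same order as the main term, so the constant term must be tracked exactly. I would push the $\max(k,l)$-reformulation through together with Dirichlet's divisor estimate $\sum_{n\le m}d(n)=m\log m+(2\gamma-1)m+O(\sqrt m)$ (equivalently $\sum_{M\le m}\{m/M\}=(1-\gamma)m+O(\sqrt m)$) to obtain $S_1(m)=m^2\log m+C_1 m^2+O(m^{3/2}\log m)$, and then combine with $\sum_{e\le N}\mu(e)/e^2=1/\zeta(2)+O(1/N)$ and $\sum_{e\le N}\mu(e)\log e/e^2=\zeta'(2)/\zeta(2)^2+O((\log N)/N)$ to get $\sum_{a,b\le N}\gcd(a,b)=\tfrac{N^2\log N}{\zeta(2)}+CN^2+O(N^{3/2}\log N)$, whence $\E\big(c(M_{2,N})\big)=\tfrac{6}{\pi^2}\log N+C+O(\log N/\sqrt N)$. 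Aside from this bookkeeping the whole argument is elementary, using only the Dirichlet hyperbola method and $\sum_{d\le x}\mu(d)/d^s=1/\zeta(s)+O(x^{1-s})$.
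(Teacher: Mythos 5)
Your reduction is exactly the paper's: by formula~(\ref{gcd2}) the cycle count of a relaxed type-$(2,1)$ rainbow m.s.\ is $\gcd(\alpha_1,\alpha_2)$ with $\alpha_1,\alpha_2$ independent and uniform on $[N]$, so the theorem is a statement about the gcd of two random integers. The difference is what happens next: the paper stops at this point and cites the classical results (Diaconis--Erd\H{o}s) for the distribution and moments of $\gcd(\alpha_1,\alpha_2)$, whereas you reprove them from scratch. Your derivation is essentially correct: the point-probability estimate via $Q(m)=\sum_{d\le m}\mu(d)\lfloor m/d\rfloor^2=m^2/\zeta(2)+O(m\log 2m)$ and the dilation $\gcd=x\Leftrightarrow$ coprime pair in $[\lfloor N/x\rfloor]^2$ is the standard argument; the moment computation via $\gcd(a,b)^r=\sum_{d\mid\gcd(a,b)}J_r(d)$, the nested-floor identity, and the $\max(k,l)$ rewriting of $S_r(m)$ gives the right constants ($\sum_{M\ge1}(2M-1)M^{-3}=2\zeta(2)-\zeta(3)$ indeed yields $\tfrac{N}{3}\bigl(\tfrac{2\zeta(2)}{\zeta(3)}-1\bigr)$); and you correctly identify the one genuinely delicate point, namely that for $r=1$ the secondary term is of the same order as the naive error, so the constant $C$ in the mean requires the Dirichlet divisor estimate rather than crude floor bounds. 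Two minor caveats: the error term $O(\log(N/x)/(Nx))$ degenerates at $x=N$ (it should be read as $\log(2N/x)$ or similar), but that imprecision is already in the theorem statement, not introduced by you; and the $r=1$ computation is only sketched, so a referee would want the bookkeeping for $S_1(m)$ and the sum $\sum_{e}\mu(e)\log e/e^2$ written out. What your route buys is a self-contained elementary proof; what the paper's route buys is brevity and an error term ($O(\log N/\sqrt N)$ for the mean) already certified in the literature.
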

 
  \begin{proof}
  For $c(M_{2,N})$, the formula (\ref{gcd2}) is applicable, in which $\alpha_1$ and $\alpha_2$ are taken uniformly and independently at random from interval $[N]$. Then, all claims of the theorem follow from the classical results for the greatest common divisor of two random integers. See, for example, \cite{diaconis_erdos2004}. 
 \end{proof}
 
 This result shows that the number of cycles in a random rainbow m.s. of type $(2, 1)$ behaves very differently than in a general random m.s. First of all, the probability that a rainbow m.s. has some fixed small number of cycles converges to a positive number as the size of m.s. grow. For example, the probability that the m.s. has only one cycle converges to $\frac{6}{\pi^2}$. This is dramatically different from the situation for general random m.s., where this probability is exponentially small in the size of the m.s. 
 
 Second, the expected number of cycles is logarithmic in $N$, while in a general random m.s. this quantity is linear in the size of the system $n$. This suggests that a typical rainbow m.s. has a small number of long cycles, in contrast to a typical general m.s. where it has a large number of small cycles.
 
 Finally, the variance of the number of cycles is of order $N$, which implies that with some small probability a rainbow m.s. of type $(2,1)$ still can have a large number of small cycles. 
 
 For random rainbow m.s. of higher types, the question about  the distribution of the number of cycles is more difficult. Even for the type $(3,1)$, for which an explicit formula (\ref{gcd3}) is available, the calculation of the distribution of the $\gcd(\alpha_1 + \alpha_2,\alpha_2 + \alpha_3)$  is difficult. The numerical evaluations show that the law in this case is the same as for the type $(2,1)$:  $\P\Big(c(M_{3,N}) = x\Big) \sim \frac{6}{\pi^2} \frac{1}{x^2}$ for large $N$.
 
 In a recent paper, \cite{dgzz2019}, it was shown that if a m.s. $M_N$ is taken uniformly at random among all meander systems with the length of at most $2N$ and with the total of $p\geq 4$ leaves in the trees corresponding to the upper and lower pairings of $M_N$, then 
 \bal{
 \P\Big(c(M_N) = 1\Big) \sim \frac{1}{2}\Big(\frac{2}{\pi^2}\Big)^{p-3} \binom{2p - 4}{p - 2},
 }
 when $N \to  \infty$. For $p = 4$, this expression agrees with the numerical evaluation for the rainbow m.s. of type $(3,1)$, mentioned in the previous paragraph. Unfortunately, the method in \cite{dgzz2019} does not generalize immediately to  probabilities  $\P\Big(c(M_N) = x\Big)$ for $x > 1$.

%%%%%%%%%%%%%%%%%%%%%%%%%%%%%%%
%Section The largest cycle of a random meander system
%
%%%%%%%%%%%%%%%%%%%%%%%%%%%%%%%%%

\section{The largest cycle}
\label{sectionLargestCycle}

\begin{figure}[htbp]
\centering
              \includegraphics[width= 0.9\textwidth]{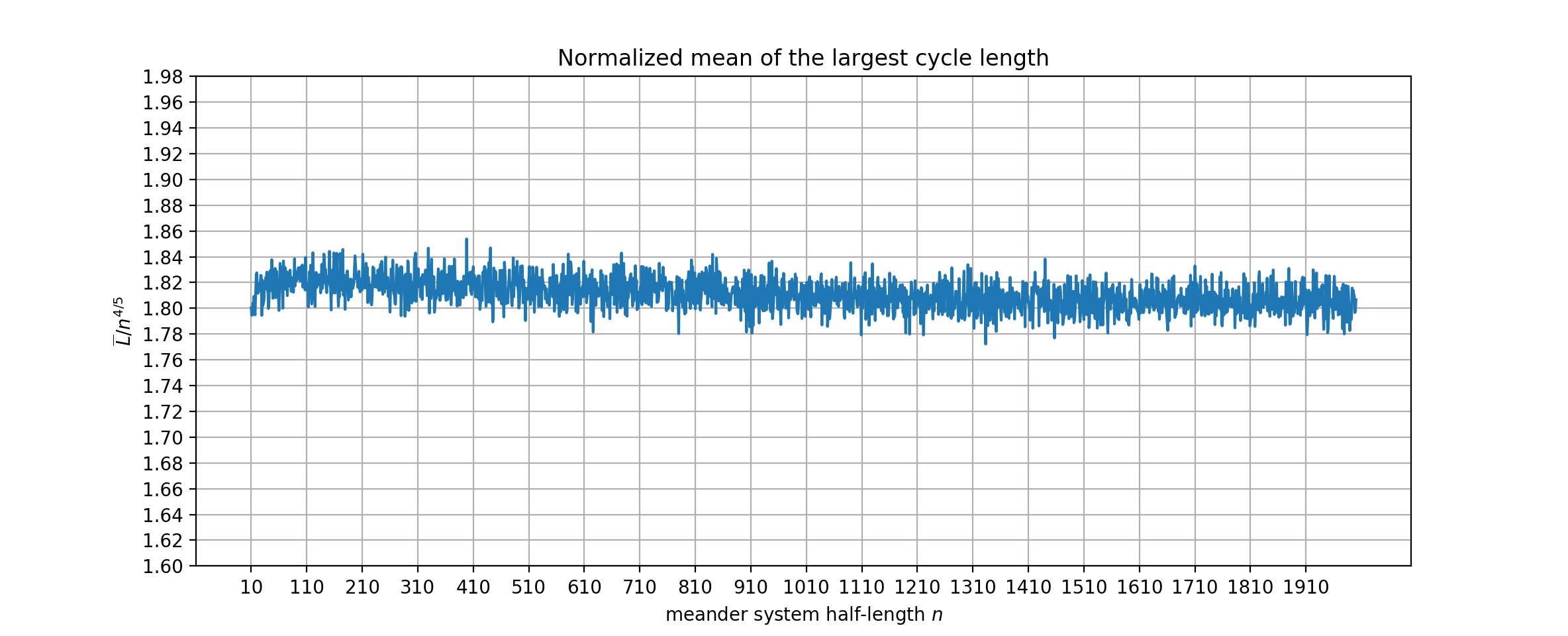}
              \caption{The length of the largest cycle normalized by dividing it by $n^{4/5}$, where $n$ is the half-length of the meander system. The horizontal axis shows the half-size of the meander $n$, $10 \leq n \leq 2000$. The plotted half-size of the largest cycle is the average over a sample of $4000$ independent uniformly distributed meander systems.}
              \label{figSizeLargestCycle}
\end{figure}

For random rainbow m.s. of size $n$ and type $(2, 1)$, and more generally of type $(s, t)$, the results in Theorem \ref{theoRainbowMeanders} and in \cite{dgzz2019} show that the probability that a meander system has a single cycle converges to a positive number as $n \to \infty$. In particular, this implies that the expected size of the largest cycle is $\Omega(n)$. For random meander systems with uniform distribution the situation is different. 

Figure \ref{figSizeLargestCycle} shows results of numerical simulations for random meander systems of size $n$. They suggest that the largest cycle in a random m.s. of half-size $n$ has length around $n^{4/5}$. 

This behavior is surprising. Usually, when we consider a random system in which a concept of a ``cycle'' is well-defined, we encounter a dichotomy between the expected number of cycles and the length of the largest cycle.  For example, for a random permutation in the permutation group $S_n$, the expected number of cycles is $\sim \log n$ and the expected length of the largest cycle is proportional to $n$. For a random partition of the set $[n] = \{1, \ldots, n\}$, the expected number of blocks is $\sim n/\log n$ while the expected length of the largest block is proportional to $\log n$. 

In contrast, for random meander systems, Theorem \ref{theoNumberRinglets} establishes that the number of cycles grows at a rate proportional to $n$. However, numeric simulations suggest that  the expected  largest cycle length also grows at a polynomial rate, as $n^\alpha$, where $\alpha$ is close to $4/5$. 

\begin{figure}[htbp]
\centering
              \includegraphics[width= 0.9 \textwidth]{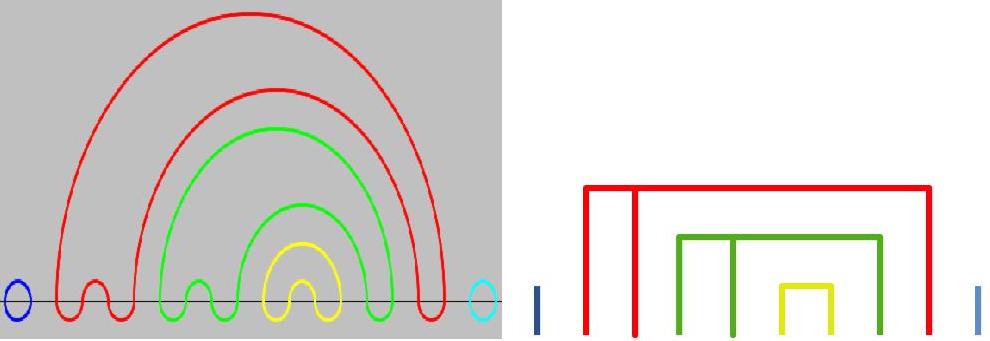}
              \caption{An illustration of the bijection between comb-like meander systems and non-crossing partitions}
              \label{figMeanderNCPartition}
\end{figure}

This behavior  becomes even more surprising if we compare it with that of another type of random meander systems, \emph{comb-like meander systems}. These systems have  the lower non-crossing pairing $[(1, 2) (3, 4), \ldots, (2n - 1, 2n)]$. It is not difficult to see that these meander systems can be bijectively mapped to non-crossing (``NC'') partitions, in such a way that cycles of a meander system correspond to blocks of the image partition. This bijection is illustrated in Figure \ref{figMeanderNCPartition}.

Due to this bijection, the comb-like meander systems is much easier to analyze. In particular, from the result about non-crossing partitions, it follows that the expected number of cycles is $(n + 1)/2$, its variance $\sim n/8$, and that the number of cycles is asymptotically normal for large $n$. 

Moreover, by using another bijection, from NC partitions to rooted planar trees, it is not difficult to prove that the distribution of cycle lengths is geometric, meaning that for large $n$, the expected number of cycles of half-length $l$ is  $\sim n/2^{l + 1}$. One can also calculate the covariances between numbers of cycles with half-length $l$ and $l'$, and prove some limit distribution theorems for these numbers. (See \cite{kortchemski_marzouk2017} for corresponding results for non-crossing partitions.)

For the largest block in a non-crossing partition, we have the following result. 

\begin{theo}
\label{theoSizeLargestBlock}
Let $L_n$ denote the size of the largest block in a uniformly distributed random NC partition of $[n]$. 
Then, as $n \to \infty$, 
\bal{
\frac{L_n}{\log_2 n} \pcv 1,
}
where the convergence is in probability. 
\end{theo}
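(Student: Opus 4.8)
The plan is to prove, for every $\eps>0$, the two estimates $\P\big(L_n>(1+\eps)\log_2 n\big)\to 0$ and $\P\big(L_n<(1-\eps)\log_2 n\big)\to 0$, which together give $L_n/\log_2 n\pcv 1$. Both rest on one enumeration fact. If $B=\{i_1<\dots<i_j\}$ is a block of an NC partition $P$ of $[n]$, a point strictly between two consecutive elements of $B$ can only be joined to points in the same interior gap (else it crosses $B$), while the two exterior pieces $[1,i_1)$ and $(i_j,n]$ may be joined to each other (arcs nesting around $B$); hence NC partitions of $[n]$ with $B$ a block are in bijection with tuples consisting of an arbitrary NC partition on each of the $j-1$ interior gaps together with one arbitrary NC partition on the $a_0+a_j$ exterior points taken in their natural linear order. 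Tracking block size with $C(x)=\sum_n C_n x^n$ and $D(x):=\frac{d}{dx}\big(xC(x)\big)=\sum_{m\ge 0}(m+1)C_m x^m$, the generating function for pairs $(P,\text{ marked block of size }j)$ is $x^jC(x)^{j-1}D(x)$. Summing over $j\ge k$ and using $t:=xC(x)$, $C(x)=1/(1-t)$, $D(x)=t'(x)$, the geometric series telescopes to $t^k t'(x)=\tfrac1{k+1}\tfrac{d}{dx}t^{k+1}$, and Lagrange inversion applied to $t=x/(1-t)$ gives the exact identity
\[
\E\big[\#\{\text{blocks of size}\ge k\}\big]=\frac{(n+1)\binom{2n-k}{n}}{\binom{2n}{n}}=(n+1)\prod_{i=0}^{k-1}\frac{n-i}{2n-i}
\]
(which is $(n+1)/2$ at $k=1$, as quoted in the text, and equals $n\,2^{-k}$ times a bounded factor for $k=O(\log n)$).

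Since each factor of the product is at most $\tfrac12$, the right side is at most $(n+1)2^{-k}$; taking $k=\lceil(1+\eps)\log_2 n\rceil$, Markov's inequality gives $\P(L_n\ge k)\le (n+1)2^{-k}=O(n^{-\eps})\to 0$, which is the upper bound. For the lower bound I will use the second moment method on $X_k$, the number of blocks of size exactly $k$, with $k=k_n=\lfloor(1-\eps)\log_2 n\rfloor$. Subtracting the identity at $k$ and $k+1$ gives $\E X_k=(n+1)\big(\prod_{i=0}^{k-1}\tfrac{n-i}{2n-i}\big)\tfrac{n}{2n-k}$; for $k=O(\log n)$ the product is $2^{-k}(1+o(1))$, so $\E X_k\sim n\,2^{-(k+1)}\sim\tfrac12 n^{\eps}\to\infty$. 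The same bound gives, for a uniform NC partition of $[m]$ and all $m$, $\E X_k^{(m)}\le 2^{-(k+1)}(m+k+2)$ (from $\prod\le 2^{-k}$ and $\tfrac{m}{2m-k}\le\tfrac12(1+k/m)$ when $k\le m$, trivially otherwise).

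For the second moment, fix a $k$-set $B_1$ and condition on it being a block of $P$. By the decomposition above, the restrictions of $P$ to the $k-1$ interior gaps and to the single merged exterior piece are independent uniform NC partitions, their sizes are nonnegative integers summing to $n-k$ over these $k$ pieces, and the blocks of $P$ other than $B_1$ are precisely the blocks of those $k$ restrictions. Hence
\[
\E\big[(X_k-1)\,\big|\,B_1\text{ a block}\big]\le 2^{-(k+1)}\big((n-k)+k(k+2)\big)=(1+o(1))\,\E X_k,
\]
uniformly in $B_1$, since $k^2=o(n)$. Summing $\P(B_1\text{ a block})\,\E[(X_k-1)\mid B_1\text{ a block}]$ over all $k$-sets $B_1$ gives $\E[X_k(X_k-1)]\le(1+o(1))(\E X_k)^2$, so $\Var X_k\le \E X_k+o\big((\E X_k)^2\big)=o\big((\E X_k)^2\big)$, and Chebyshev yields $\P(L_n<k)\le\P(X_k=0)\le\Var X_k/(\E X_k)^2\to 0$.

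The step I expect to be the main obstacle is this second moment estimate: the naive bound $\E X_k^{(m)}\le(m+1)2^{-k}$ is off by a factor of $2$ and would only give $\E[X_k(X_k-1)]\lesssim 2(\E X_k)^2$, useless for Chebyshev. One must keep the sharper factor $\tfrac m{2m-k}$ --- equivalently, retain the subtracted count $\#\{\text{blocks}\ge k+1\}$ --- whose excess over $\tfrac12$ contributes only a relative error $O(k^2/n)=o(1)$ after summing over the $k=O(\log n)$ pieces. Everything else (including the routine verification that the product is $2^{-k}(1+o(1))$ for $k=O(\log n)$) is bookkeeping. Combining the two tail bounds gives $\P(|L_n/\log_2 n-1|>\eps)\to0$ for every $\eps>0$, i.e.\ the claimed convergence in probability.
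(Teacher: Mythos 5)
Your proof is correct, and it takes a genuinely different route from the paper. The paper disposes of this theorem in three lines: it maps NC partitions bijectively to rooted planar trees so that a block of size $k$ becomes a vertex of out-degree $k$, and then cites Janson's Theorem 19.3 on the maximum out-degree of a simply generated random tree (with $\rho=1$, $\tau=1/2$, giving the constant $1/\log 2$). You instead give a self-contained first- and second-moment argument directly on the partition: the gap decomposition of the partitions containing a fixed block is the right one (interior gaps are isolated by non-crossing, the two exterior pieces merge into a single NC partition in the induced linear order), the Lagrange-inversion identity $\E\,\#\{\text{blocks of size}\ge k\}=(n+1)\binom{2n-k}{n}/\binom{2n}{n}$ checks out (it reproduces $(n+1)/2$ at $k=1$), and you correctly identify and handle the one delicate point --- that the crude bound $(m+1)2^{-k}$ per piece loses a factor of $2$ and one must keep the exact-size count $\E X_k\sim m\,2^{-(k+1)}$ to make Chebyshev close. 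What each approach buys: the paper's is shorter and plugs into general simply-generated-tree machinery (which it reuses elsewhere), while yours is elementary, yields an exact expectation formula and an explicit $O(n^{-\eps})$ upper-tail rate, and is methodologically the same second-moment scheme the paper itself uses for Theorem \ref{theoLargestCycle} on cluster cycles --- so it fits the paper's toolkit even though it is not the proof given there.
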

Note that $\log_2 n \approx 1.443 \log n$, and therefore the largest block in an NC partition is on average shorter than in a usual set partition where it is around $e \log n \approx 2.718 \log n$.

\begin{coro}
Let $M_n$ be a uniformly distributed random comb-like meander on $2n$ points. Then, the length of the largest cycle in $M_n$ is $\sim 2\log_2 n$ in probability.
\end{coro}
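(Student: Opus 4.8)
The plan is to push Theorem \ref{theoSizeLargestBlock} forward along the bijection between comb-like meander systems and non-crossing partitions illustrated in Figure \ref{figMeanderNCPartition}. First I would record the two structural facts that make this bijection usable here. Fact (i): a comb-like meander $M_n$ is completely determined by its upper non-crossing pairing of $[2n]$, and the assignment of that pairing to the associated NC partition is a bijection from the set of comb-like meanders on $2n$ points onto the set of NC partitions of $[n]$; since both sets are enumerated by the Catalan number $C_n$, this bijection carries the uniform law on comb-like meanders to the uniform law on NC partitions of $[n]$. Fact (ii): under the bijection the two endpoints $\{2i-1,2i\}$ of the $i$-th tooth of the comb collapse to the single element $i\in[n]$, and the set of teeth traversed by any given cycle of $M_n$ is precisely a block of the image partition.

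Next I would translate the relevant statistic. A cycle of $M_n$ of half-length $k$ has support consisting of $2k$ points on the line $y=0$, namely the endpoints of $k$ teeth; by Fact (ii) these $k$ teeth constitute a block of size $k$ in the image partition, and conversely every $k$-element block arises this way. Consequently the half-length of the largest cycle of $M_n$ equals $L_n$, the size of the largest block of the corresponding (uniformly random) NC partition of $[n]$, so the length of the largest cycle of $M_n$ — its number of arcs, equivalently twice its half-length — equals $2L_n$. Applying Theorem \ref{theoSizeLargestBlock} then gives $2L_n/(2\log_2 n)\pcv 1$, which is exactly the assertion.

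In short, the corollary follows from Theorem \ref{theoSizeLargestBlock} together with the dictionary supplied by the bijection, and there is no serious analytic content. The only point demanding care — the one place the argument could go wrong — is verifying that the bijection matches the half-length of a cycle with the size of a block (rather than with twice, or half, of it) and that it sends the uniform measure to the uniform measure; both are immediate from the explicit construction underlying Figure \ref{figMeanderNCPartition}, but I would state them cleanly so that the factor $2$ appearing in the corollary is transparent.
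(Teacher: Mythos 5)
Your argument is correct and is exactly the route the paper intends: the corollary is stated without a separate proof precisely because it follows from Theorem \ref{theoSizeLargestBlock} via the bijection of Figure \ref{figMeanderNCPartition}, under which cycles correspond to blocks, half-length $k$ corresponds to block size $k$, and the uniform measure is preserved. Your care with the factor of $2$ (length $=2\times$ half-length $=2\times$ block size) is the only point of substance, and you handle it correctly.
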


\begin{figure}[htbp]
\centering
              \includegraphics[width= 0.7  \textwidth]{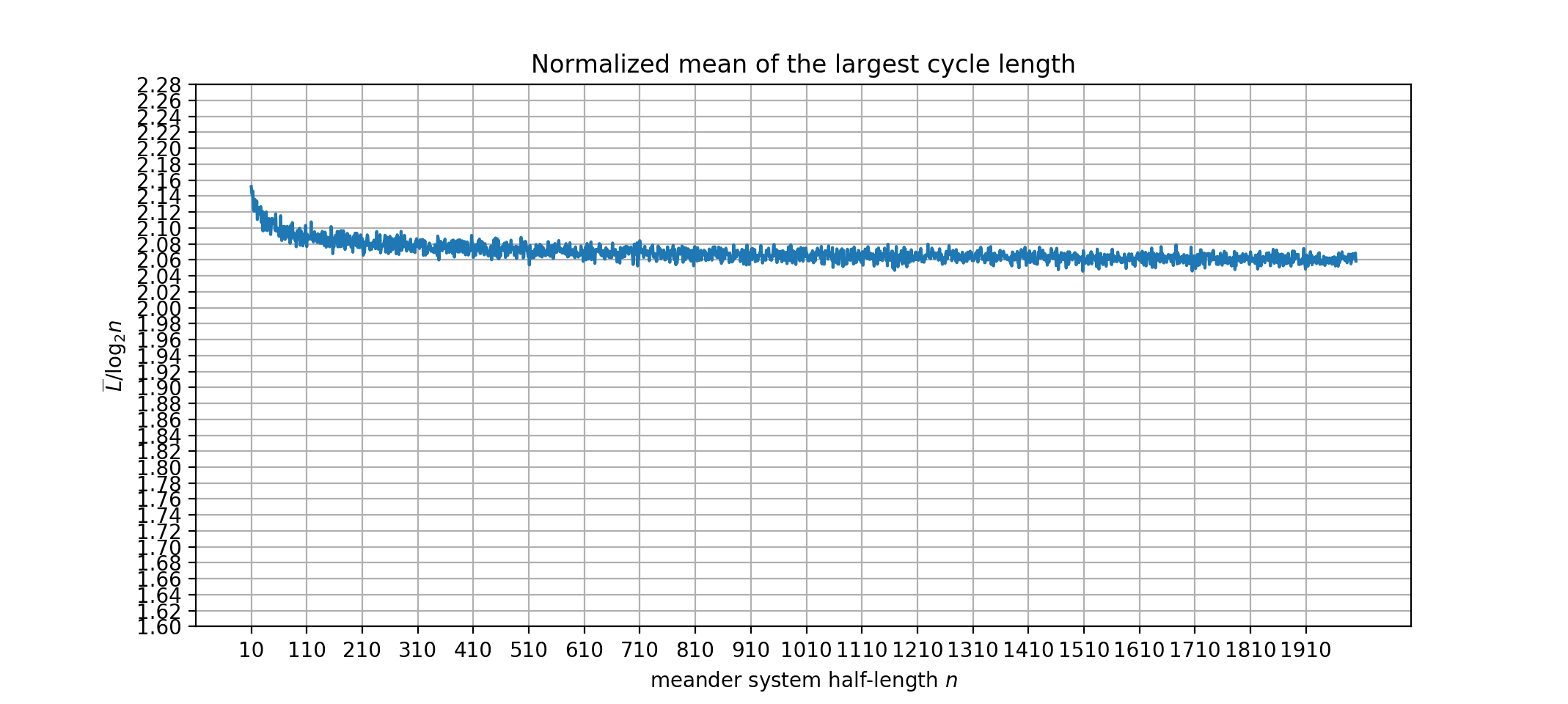}
              \caption{The largest cycle length in a random comb-like m.s. normalized by dividing it by $\log_2 n$, where $n$ is the half-length of the meander system. The horizontal axis shows the half-size of the random m.s. $n$, $10 \leq n \leq 2000$. The plotted  largest cycle length is the average over a sample of $4000$ independent comb-like m.s.}
              \label{figSizeLCinCombMeanders}
\end{figure}

This result is illustrated by a plot in Figure \ref{figSizeLCinCombMeanders}. Note especially that the normalization in this plot is different from the normalization in Figure \ref{figSizeLargestCycle}. Here the length of the cycle is divided by $\log_2 n$, while in Figure \ref{figSizeLargestCycle} it was divided by $n^{4/5}$.

\begin{figure}[htbp]
\centering
              \includegraphics[width= 0.4 \textwidth]{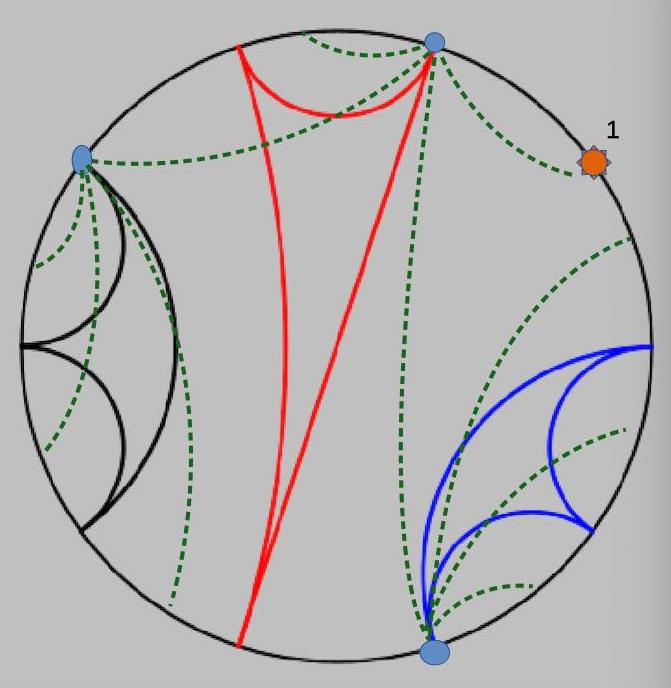}
              \caption{A bijection between NC partitions and rooted planar trees. This example shows a partition of $[n] = [10]$ with $b= 4$ blocks mapped to a tree on $n + 1 = 11$ vertices with $b = 4$ internal vertices, and $7$ leaves.  }
              \label{figBijectionPartitionTree}
\end{figure}

\begin{proof}[Proof of Theorem \ref{theoSizeLargestBlock}]

It is easy to map bijectively NC partitions to rooted ordered trees. This can be done either as in \cite{kortchemski_marzouk2017} or as indicated in Figure \ref{figBijectionPartitionTree}. These bijections have the property that a block of size $k$ corresponds to a vertex of out-degree $k$. 

Then we can use a result about the distribution of the maximum out-degree in a uniformly distributed random tree. 

In particular, Theorem 19.3 for simply generated trees in Janson \cite{janson2012} implies that 
\bal{
\frac{L_n}{\log n} \to \frac{1}{\log(\rho/\tau)} = \frac{1}{\log 2}, 
}
since for uniformly distributed trees, the weights are $w_k = 1$, the radius of the convergence of the weight-generating function $\rho = 1$, and the parameter $\tau = 1/2$. 
\end{proof}

By Example 19.17 in \cite{janson2012}, the distribution of the largest degree in a uniform random tree can be approximated by the distribution of a maximum of i.i.d geometric random variables with parameter $p = 1/2$. This can be translated in terms of the largest cycle length in a comb-like meander. However, even when $n \to \infty$, there is no limiting distribution and the behavior of the maximum depends on how the binary expansion of $n$ looks like. (There is a significant difference between such cases as $n = 2^k$ and $n = 2^k - 1$.) Still, it is possible to formulate the following result.

(We use notation $\lfloor t \rfloor$  for the largest integer $\leq t$, and $\{t\} = t - \lfloor t \rfloor$ for the fractional part of $t$. )

\begin{theo}
\label{theoDistrLargestBlock}
Let $L_n$ denote the largest cycle half-length in a uniformly distributed random comb-like meander system on $2n$  points. Let  $\alpha(n) = 2^{\{\log_2 n\}}\in [1, 2)$. Then, as $n \to \infty$, 
\bal{
\P\Big[L_n  - \lfloor \log_2 n \rfloor \leq x\Big] =  \exp\Big( -\alpha(n) 2^{-(x + 1)}\Big) \Big( 1 + O\big(n^{-1}\log^2 n\big)\Big).
}
\end{theo}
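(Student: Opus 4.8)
The plan is to move the statement onto random trees, derive an exact finite identity for the distribution function of the maximal out-degree, and then expand that identity to the required precision. First I would chain the two bijections already used in the paper: a comb-like m.s.\ on $2n$ points corresponds to a non-crossing partition of $[n]$ with cycles going to blocks, and a non-crossing partition of $[n]$ corresponds to a rooted planar tree on $N:=n+1$ vertices with a block of size $\ell$ going to a vertex of out-degree $\ell$. Under the composition a cycle of half-length $\ell$ maps to a vertex of out-degree $\ell$, so $L_n$ has the same law as $\Delta_N$, the maximal out-degree of a uniformly random rooted planar tree on $N$ vertices (leaves contribute out-degree $0$ harmlessly). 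Equivalently, $\P(L_n\le m)$ is the number of planar trees on $N$ vertices with all out-degrees at most $m$, divided by $C_{N-1}=\frac1N\binom{2N-2}{N-1}$.

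Next I would extract that ratio by Lagrange inversion. Let $T(z)=z/(1-T(z))$ be the generating function of rooted planar trees by number of vertices, and let $T_m(z)=z\,\phi_m(T_m(z))$ with $\phi_m(t)=1+t+\dots+t^m=\frac{1-t^{m+1}}{1-t}$ be its restriction to trees of maximal out-degree $\le m$. Then $[z^N]T_m=\frac1N[t^{N-1}]\phi_m(t)^N$, and expanding $(1-t^{m+1})^N$ by the binomial theorem against $(1-t)^{-N}$ yields the exact, finite identity
\[
\P(L_n\le m)=\sum_{j\ge0}(-1)^j\binom{N}{j}\,\frac{\binom{2N-2-j(m+1)}{N-1}}{\binom{2N-2}{N-1}},
\]
which is simply inclusion--exclusion over the set of vertices forced to have out-degree $\ge m+1$; the $j$-th summand vanishes once $j(m+1)>N-1$.

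Then I would set $m=\lfloor\log_2 n\rfloor+x$ and put $\lambda:=\alpha(n)2^{-(x+1)}$, using $2^{-\lfloor\log_2 n\rfloor}=\alpha(n)/n$, so that $2^{-(m+1)}=\lambda/n$ and $N2^{-(m+1)}=(1+n^{-1})\lambda$. Two elementary estimates finish the computation: $\binom{N}{j}=\frac{N^j}{j!}\bigl(1+O(j^2/N)\bigr)$, and $\frac{\binom{2N-2-a}{N-1}}{\binom{2N-2}{N-1}}=\prod_{i=1}^a\frac{N-i}{2N-1-i}=2^{-a}\bigl(1+O(a^2/N)\bigr)$ for $j,a=o(N)$. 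Taking $a=j(m+1)$, so that $2^{-a}=(\lambda/n)^j$ and $a^2/N=O(j^2\log^2 n/n)$, the $j$-th summand becomes $\frac{(-\lambda)^j}{j!}\bigl(1+O(j^2\log^2 n/n)\bigr)$; summing over $j$, the leading parts give $\sum_j(-\lambda)^j/j!=e^{-\lambda}$, and since $\lambda=O(1)$ in the range of interest the errors sum to $O(\log^2 n/n)\sum_j\lambda^j j^2/j!=O(\log^2 n/n)$, which is the claimed estimate.

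The main obstacle I expect is uniformity of the error term in $x$: the per-summand bound degrades when $\lambda=\alpha(n)2^{-(x+1)}$ is large, i.e.\ when $m$ lies well below $\log_2 n$, since summing $\lambda^j j^2/j!$ reinstates a factor $e^{N2^{-(m+1)}}$. For such $m$ I would argue crudely instead: the expected number of vertices of out-degree $>m$ in a uniform planar tree on $N$ vertices is of order $n2^{-(m+1)}$, which is $\Omega(n^{\eps})$ once $m\le(1-\eps)\log_2 n$, so a second-moment concentration bound gives $\P(L_n\le m)=e^{-\Omega(n^{\eps})}$; as $e^{-\lambda}=e^{-\Omega(n^{\eps})}$ there as well, both sides and their difference are $\ll n^{-1}\log^2 n$. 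After this case split the estimate holds for all $m\ge0$, in particular for every fixed $x$ as $n\to\infty$. An alternative route represents the multiset of out-degrees of a uniform planar tree as i.i.d.\ $\mathrm{Geometric}(1/2)$ variables conditioned on their sum, giving $\P(L_n\le m)=(1-2^{-(m+1)})^N\,\P(S^{(m)}_N=N-1)/\P(S_N=N-1)$; this works too, but squeezing the rate $O(n^{-1}\log^2 n)$ out of a local limit theorem for the truncated sum $S^{(m)}_N$ is messier than the Lagrange-inversion computation.
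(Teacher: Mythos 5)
Your reduction to trees and your main computation are correct, but the analytic route is genuinely different from the paper's. The paper works with the generating function $C^{(k)}(z)$ of NC partitions with blocks of size at most $k$ (equivalently your $T_m$, since $y=zC^{(k)}$ satisfies $y=z\,\frac{1-y^{k+1}}{1-y}$), locates the dominant square-root singularity $z_0=\tfrac14\big(1+2^{-(k+1)}\big)+O(k^2 2^{-2k})$ via the smooth implicit-function schema (Meir--Moon), and extracts $[z^n]$ by transfer, getting $\P(L_n\le k)=(4z_0)^{-(n+1)}(1+O(n^{-1})+O(k^2/2^k))$ before substituting $k=\lfloor\log_2 n\rfloor+x$. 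You instead use Lagrange inversion on $T_m=z\phi_m(T_m)$ to get the exact finite inclusion--exclusion identity $\P(L_n\le m)=\sum_j(-1)^j\binom{N}{j}\binom{2N-2-j(m+1)}{N-1}/\binom{2N-2}{N-1}$ and then expand each summand with elementary binomial asymptotics. Your identity and the estimates $\binom{N}{j}=\frac{N^j}{j!}(1+O(j^2/N))$ and $\prod_{i=1}^{a}\frac{N-i}{2N-1-i}=2^{-a}(1+O(a^2/N))$ are right, and they deliver $e^{-\lambda}+O(n^{-1}\log^2 n)$ with $\lambda=\alpha(n)2^{-(x+1)}$, which for fixed $x$ is equivalent to the stated multiplicative form. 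What your approach buys is an exact pre-asymptotic formula and a fully elementary error analysis, with no appeal to transfer theorems; what the paper's approach buys is brevity given the Flajolet--Sedgewick machinery and easier generalization to other weight sequences. Two small caveats. First, your per-summand relative error $(1+O(j^2\log^2 n/n))$ is only literally valid for $j(m+1)=O(\sqrt{N})$; you should truncate the sum at, say, $j_0=\sqrt{n}/\log n$ and bound the tail by the crude inequalities $\binom{N}{j}\le N^j/j!$ and $\prod_{i=1}^{a}\frac{N-i}{2N-1-i}\le 2^{-a}$, which make the tail super-polynomially small -- routine, but it should appear in a writeup. Second, your case split for $m\le(1-\eps)\log_2 n$ overclaims: a second-moment bound gives only $\P(L_n\le m)=O(1/\E X)=O(n^{-\eps})$, not $e^{-\Omega(n^{\eps})}$, and a polynomial bound cannot verify the multiplicative form when $e^{-\lambda}$ is super-polynomially small. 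Fortunately that regime is not needed: the theorem (like the paper's own proof) is a fixed-$x$ statement, so your main computation already suffices and the case split can be dropped.
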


So, apart from the factor of $\alpha(n)$, the limiting distribution is the double exponential distribution, that often arises in the study of extreme value distributions. 

This result can be proved by using the relation to geometric random variables (see Example 19.17 in \cite{janson2012}). We give a direct proof using generating functions in Appendix. 

Now let us return from comb-like meanders to the case, in which both the upper and lower NC pairings are random. As seen in Figure \ref{figSizeLargestCycle},  numerical simulations suggest that the growth in the largest cycle length is approximately $n^{\alpha}$ with $\alpha \approx 4/5$. The theorem below shows rigorously that the growth is at least logarithmic. 

%%%%%%%%%%%%%%%%%%%%%%%%%%%%
%%%%%%%%%%%%%%%%%%%%%%%%%%%%%%
%Theorem about the length of the largest cycle in a 
%uniform random meander
%
%%%%%%%%%%%%%%%%%%%%%%%%%%%%%%%%
 
 \begin{theo}
 \label{theoLargestCycle}
 Let $L_n$ be the largest cycle half-length in a uniformly distributed random meander system of half-length $n$. Then, for all sufficiently large $n$, 
 \bal{
 \E L_n \geq 0.8384 \log_2 n.
 }
 \end{theo}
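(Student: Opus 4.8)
The plan is to exhibit a sequence $k=k_n$ with $k_n\ge 0.8384\log_2 n$ for large $n$ such that $\P(L_n\ge k_n)\to 1$; since $L_n\ge k_n\,\1[L_n\ge k_n]$ this gives $\E L_n\ge k_n\,\P(L_n\ge k_n)\ge 0.8384\log_2 n$ for all sufficiently large $n$. To guarantee a long cycle with high probability I will apply the second moment method to a count of \emph{cluster} cycles (Definition~\ref{defiClusterCycle}) that are supported on disjoint blocks, so that the contributing events are ``almost independent''.

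Fix $k$ and split $[2n]$ into $m:=\lfloor n/k\rfloor$ consecutive blocks $B_1,\dots,B_m$ of $2k$ points each. Let $E_i$ be the event that $B_i$ carries a cluster cycle of $M_n$, i.e.\ both the upper and the lower pairing restrict to a non-crossing pairing of $B_i$ (no arc leaves $B_i$) and the resulting half-length-$k$ meander is connected; put $X=\sum_{i=1}^m\1_{E_i}$, so $\{X\ge 1\}\subseteq\{L_n\ge k\}$. Exactly the counting in the proof of Theorem~\ref{theoLowerBound} shows that an interval of $2k$ points is internally paired in a random non-crossing pairing of $[2n]$ with probability $C_kC_{n-k}/C_n$, and conditionally on that the induced pairing is uniform over the $C_k$ non-crossing pairings of the block and independent of the pairing of the rest (and likewise for two disjoint intervals, with $C_k^2C_{n-2k}/C_n$). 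Combining this for the independent upper and lower pairings, and writing $R_k$ for the number of proper meanders on $[2k]$,
\bal{
\P(E_i)=R_k\Big(\frac{C_{n-k}}{C_n}\Big)^2,\qquad \P(E_i\cap E_j)=R_k^2\Big(\frac{C_{n-2k}}{C_n}\Big)^2\quad(i\ne j),
}
hence $\E X=mR_k(C_{n-k}/C_n)^2$ and $\E X^2=\E X+m(m-1)R_k^2(C_{n-2k}/C_n)^2$.

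From $C_{m-1}/C_m=\tfrac{m+1}{2(2m-1)}=\tfrac14\big(1+O(1/m)\big)$ one gets $C_{n-k}/C_n=4^{-k}(1+o(1))$ whenever $k=O(\log n)$, so $\E X\sim (n/k)R_k16^{-k}$; and since the Catalan sequence is log-convex, $C_{n-2k}C_n/C_{n-k}^2\ge 1$, while telescoping the product shows $C_{n-2k}C_n/C_{n-k}^2=1+O(k^2/n^2)=1+o(1)$. Therefore $\E X^2=\E X+(\E X)^2(1+o(1))$, and the Paley--Zygmund inequality gives $\P(L_n\ge k)\ge\P(X\ge 1)\ge (\E X)^2/\E X^2\to 1$ provided $\E X\to\infty$. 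To lower-bound $R_k$ I use super-additivity $R_kR_\ell\le R_{k+\ell}$, which yields $R_k\ge R_{24}^{\lfloor k/24\rfloor}\ge R_{24}^{k/24}/R_{24}$, together with the known value $R_{24}=794337831754564188184>7^{24}$, so $R_k\ge 7^{-24}\,7^{k}$. Hence $\E X\ge C\,(n/k)(7/16)^{k}(1+o(1))$ with $C=7^{-24}>0$. Choosing $k_n=\lceil c\log_2 n\rceil$ with $0.8384<c<1/(4-\log_2 7)=0.83847\ldots$ makes $\E X\ge C'\,n^{\,1-c(4-\log_2 7)}/k_n\to\infty$, so $\P(L_n\ge k_n)\to 1$, and the theorem follows.

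The main obstacle is the second moment estimate: one must show that the cluster-cycle indicators $\1_{E_i}$ over disjoint blocks are asymptotically uncorrelated. Log-convexity of the Catalan numbers actually makes these events \emph{positively} correlated, so one cannot simply substitute a cleaner negative-association or union-type argument; the correlation has to be controlled quantitatively through the ratio $C_{n-2k}C_n/C_{n-k}^2$, using that $k$ grows only logarithmically to render it $1+o(1)$. A secondary, non-structural point is that the explicit constant $0.8384$ is essentially $1/(4-\log_2 7)$: it comes from feeding the numerically computed meander number $R_{24}$ (any $R_{k_0}\ge 7^{k_0}$ would do) through super-additivity, and any sharper rigorous lower bound on the meander growth constant would push the coefficient higher — though this method, producing only a $\Theta(\log n)$ bound, cannot reach the numerically observed $n^{4/5}$.
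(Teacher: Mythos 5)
Your proposal is correct and follows essentially the same route as the paper: a second-moment argument applied to the count of cluster cycles of half-length $k_n\approx c\log_2 n$, with the Catalan-ratio asymptotics and the superadditivity-based lower bound $R_k\geq \mathrm{const}\cdot 7^k$ forcing $c$ just below $1/(4-\log_2 7)\approx 0.8385$. The only differences are cosmetic: you count cluster cycles on $\lfloor n/k\rfloor$ disjoint blocks rather than at all $2n-2k+1$ positions (which cleans up the covariance computation at the harmless cost of a factor of $2k$ in $\E X$), and you justify $R_k\geq 7^k/R_{24}$ via the tabulated value of $R_{24}$ where the paper simply asserts the estimate $M_{k}\geq 7^{k}$.
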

 
% \begin{figure}[htbp]
%\centering
%              \includegraphics[width= 0.5 \textwidth]{Comb.jpg}
%              \caption{A comb cycle of length $K$.}
%              \label{figComb}
%\end{figure}

 For the proof we need a couple of lemmas. 
 
 \begin{lemma}
 Suppose $k_n$ is an integer valued non-negative function of $n$ and $1 \ll k_n \ll n$. Then 
 \begin{align}
 \label{equExp}
 \Big(\frac{C_{n - k_n}}{C_n}\Big)^2 &= 2^{-4k_n} \bigg(1  + 3\frac{k_n}{n} + O
 \Big[\big(\frac{k_n}{n}\big)^2\Big]\bigg)
 \\
 \label{equCov}
 \Big(\frac{C_{n - 2 k_n}}{C_n}\Big)^2 - \Big(\frac{C_{n - k_n}}{C_n}\Big)^4
  &= 2^{-8k_n}\bigg(3 \Big(\frac{k_n}{n}\Big)^2 
  +  O\Big[\big(\frac{k_n}{n}\big)^3\Big] \bigg)
 \end{align}
 \end{lemma}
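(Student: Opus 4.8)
The plan is to reduce both identities to the exact product formula
\[
\frac{C_{n-k}}{C_n}=4^{-k}\prod_{j=0}^{k-1}\Bigl(1+\frac{3}{2(n-j)-1}\Bigr),
\]
valid for $1\le k\le n$ (throughout I abbreviate $k_n$ to $k$). This follows by writing $C_m=(2m)!/(m!\,(m+1)!)$, telescoping the three products of consecutive integers, and simplifying; the case $k=1$ recovers $p_n(x)=(n+1)/(2(2n-1))$ from the proof of Theorem~\ref{theoNumberRinglets}. Set $u_j:=3/(2(n-j)-1)$, so $0<u_j\le 3/(2(n-k)-1)=O(1/n)$ uniformly for $0\le j\le k-1$ since $1\ll k\ll n$, and put $h_k:=4^k C_{n-k}/C_n=\prod_{j=0}^{k-1}(1+u_j)$. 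Then the two quantities in the lemma are $2^{-4k}h_k^2$ and $2^{-8k}\bigl(h_{2k}^2-h_k^4\bigr)$, so it suffices to expand $h_k^2$ and $h_{2k}^2-h_k^4$ in powers of $k/n$.

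For \eqref{equExp} I would take logarithms: $\log h_k=\sum_{j=0}^{k-1}\log(1+u_j)=\sum_{j=0}^{k-1}u_j+O(k/n^2)$, replace each $1/(2(n-j)-1)$ by $1/(2(n-j))$ at a cost of $O(1/n^2)$ per term, and compare $\sum_{j=0}^{k-1}1/(n-j)$ with $\int_0^k dx/(n-x)=-\log(1-k/n)$, the Euler--Maclaurin error being $O(k/n^2)$ because $1/(n-x)$ is monotone. This gives $h_k=(1-k/n)^{-3/2}\bigl(1+O(k/n^2)\bigr)$, hence $h_k^2=(1-k/n)^{-3}\bigl(1+O(k/n^2)\bigr)=1+3k/n+O\bigl((k/n)^2\bigr)$, where the $O(k/n^2)$ remainder is absorbed since $k\ge 1$ forces $k/n^2\le(k/n)^2$.

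For \eqref{equCov} the efficient route is to factor $h_{2k}^2-h_k^4=(h_{2k}-h_k^2)(h_{2k}+h_k^2)$ and to compute $h_{2k}/h_k^2=\prod_{j=0}^{k-1}\frac{1+u_{k+j}}{1+u_j}$, in which the two halves of the product defining $h_{2k}$ pair off against the square $h_k^2$. Each factor equals $1+(u_{k+j}-u_j)\bigl(1+O(1/n)\bigr)$ with
\[
u_{k+j}-u_j=\frac{6k}{(2(n-k-j)-1)(2(n-j)-1)}>0,
\]
of size $O(k/n^2)$, so $\log(h_{2k}/h_k^2)=\sum_{j=0}^{k-1}(u_{k+j}-u_j)+O(k^2/n^3)$; replacing the denominators by $4(n-j)^2$ and comparing $\sum_{m=n-k+1}^n m^{-2}$ with $\int_{n-k}^n x^{-2}dx=\tfrac1{n-k}-\tfrac1n$ yields $\sum_{j=0}^{k-1}(u_{k+j}-u_j)=\tfrac32(k/n)^2+O\bigl((k/n)^3\bigr)$. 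Hence $h_{2k}/h_k^2=1+\tfrac32(k/n)^2+O\bigl((k/n)^3\bigr)$, and since $h_k^2=1+O(k/n)$ and $h_{2k}+h_k^2=2+O(k/n)$, multiplying back gives $h_{2k}^2-h_k^4=3(k/n)^2+O\bigl((k/n)^3\bigr)$.

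The one genuinely delicate point is \eqref{equCov}: expanding $h_{2k}^2$ and $h_k^4$ separately through $h_k=(1-k/n)^{-3/2}(1+O(k/n^2))$ leaves an error of order $k/n^2$, which is \emph{not} $O((k/n)^3)$ when $k\ll\sqrt n$; one must notice that the leading part of that error scales like $ck/n^2$ and cancels in the combination. Computing $h_{2k}/h_k^2$ directly as a product of near-unit ratios sidesteps this, because there the leading discrepancy is manifestly of order $(k/n)^2$ from the outset, and every remaining $\log(1+x)$ and Euler--Maclaurin remainder is $O((k/n)^3)$ or smaller. Everything else is routine estimation.
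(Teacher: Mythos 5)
Your proof is correct, but it follows a genuinely different route from the paper's. The paper starts from the four-term Stirling-type asymptotic expansion $C_n = 4^n(\pi n^3)^{-1/2}\bigl(1-\tfrac98 n^{-1}+\cdots\bigr)$, substitutes it into $C_{n-k_n}/C_n$, and obtains both (\ref{equExp}) and (\ref{equCov}) by "manipulating the asymptotic series" — in particular, the validity of (\ref{equCov}) for $k_n\ll\sqrt n$ rests on the (unstated) fact that the order-$k_n/n^2$ corrections carry the same coefficient in $(C_{n-2k_n}/C_n)^2$ and $(C_{n-k_n}/C_n)^4$ and therefore cancel. You instead work from the exact telescoped ratio $C_{n-k}/C_n=4^{-k}\prod_{j<k}(1+u_j)$ and estimate logarithms of products of near-unit factors; this is more elementary (no transcendental input beyond $\log(1+x)=x+O(x^2)$ and a sum-versus-integral comparison) and, more importantly, your factorization $h_{2k}^2-h_k^4=(h_{2k}-h_k^2)(h_{2k}+h_k^2)$ together with the ratio $h_{2k}/h_k^2=\prod_j(1+u_{k+j})/(1+u_j)$ makes the delicate cancellation in (\ref{equCov}) structural rather than a numerical coincidence to be verified. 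Your explicit remark that a naive term-by-term expansion leaves an unabsorbed $O(k/n^2)$ error when $k\ll\sqrt n$ identifies exactly the point the paper's proof leaves implicit; this is the one place where the lemma genuinely requires care, and your treatment of it is cleaner. The trade-off is that the paper's method, once the expansion of $C_n$ is taken as given, produces further correction terms mechanically, whereas yours would require new product identities for each additional order.
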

 \begin{proof}
 We use the asymptotic expansion for the Catalan numbers:
 \bal{
 C_n = \frac{4^n}{\sqrt{\pi n^3}}\bigg(1 - \frac{9}{8}n^{-1} + \frac{145}{128} n^{-2} + 
 \frac{1155}{1024}n^{-3} + O\big(n^{-4}\big)\bigg),
 }
 see Figure VI.3 on p. 384 in \cite{flajolet_sedgewick2009}. 
 
 Then, we have
 \bal{
 \frac{C_{n - k_n}}{C_n} = 2^{-2k_n} \bigg[ \frac{1}{(1 - k_n/n)^{3/2}} 
 \frac{1 - \frac{9}{8}\frac{1}{1 - k_n/n} n^{-1} + \frac{145}{128} \frac{1}{(1 - k_n/n)^2} n^{-2} +O(n^{-3}) }{1 - \frac{9}{8}n^{-1} + \frac{145}{128} n^{-2} +O(n^{-3}) }\bigg].
 }
 By expanding this expression in powers of $k_n/n$ and $n^{-1}$, we obtain the asymptotic series for this ratio. Then, formulas (\ref{equExp}) and (\ref{equCov}) can be obtained by manipulating this asymptotic series. 
 \end{proof}

 Let $I_x$ be the indicator of the event that a random meander system contains a cluster cycle $C_x$ with support $(x + 1, x + 2, \ldots, x + 2k)$.  (Recall that by Definition \ref{defiClusterCycle} cluster cycles have no gaps in their support.) Then we have 
 \begin{equation}
 \label{equExpIx}
 \E I_x =  M_k \Big(\frac{C_{n - k}}{C_n}\Big)^2 \1_{0 \leq x \leq 2n - 2k}. 
\end{equation} 
 where $M_k$ is the number of proper meanders of half-length $k$. 
 Similarly, if $\max (x, y) \leq 2n - 2k$, then 
 \begin{equation}
 \label{equVarIx}
 \E (I_x I_y) =
 \begin{cases}
 \E I_x, & \text{ if } x = y, 
 \\
 0, & \text{ if } x \ne y \text{ and } |x - y| < 2k,
 \\
 M_k^2  \Big(\frac{C_{n - 2k}}{C_n}\Big)^2,& \text{ if }  |x - y| \geq 2k.
 \end{cases}
 \end{equation}
 If $\max (x, y) > 2n - 2k$, then $\E (I_x I_y) = 0$.
 
 Let $\Cl_n(k)$ denote the number of cluster cycles of half-length $k$ in a random meander system of half-length $n$. We aim to prove the following lemma.  
 \begin{lemma}
 \label{lemmaPoisson}
 Let $1\ll k_n \ll n$. Then, for $n\to \infty$, 
 \begin{equation}
 \E \Cl_n(k_n) \sim \Var \big(\Cl_n(k_n)\big) \sim 2n M_{k_n} 2^{-4{k_n}},
 \end{equation}
 \end{lemma}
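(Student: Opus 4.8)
The plan is to expand $\Cl_n(k_n)=\sum_{x=0}^{2n-2k_n} I_x$, with $I_x$ the indicator introduced before (\ref{equExpIx}), and to read its first two moments off (\ref{equExpIx}) and (\ref{equVarIx}), controlling errors with the Catalan-ratio expansions (\ref{equExp})--(\ref{equCov}) and with the bound $M_k\le R^k$, $R\le 12.901<16$, from (\ref{upperboundRk}). For the mean, summing (\ref{equExpIx}) over the $2n-2k_n+1$ admissible indices gives $\E\,\Cl_n(k_n)=(2n-2k_n+1)\,M_{k_n}(C_{n-k_n}/C_n)^2$; since $k_n\ll n$ the prefactor is $\sim 2n$, while by (\ref{equExp}), $(C_{n-k_n}/C_n)^2=2^{-4k_n}(1+3k_n/n+O((k_n/n)^2))=2^{-4k_n}(1+o(1))$, so that $\E\,\Cl_n(k_n)\sim 2n\,M_{k_n}2^{-4k_n}$, which is the first claim.

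For the variance I would write $\Var(\Cl_n(k_n))=\sum_x\big(\E I_x-(\E I_x)^2\big)+\sum_{x\ne y}\big(\E(I_xI_y)-\E I_x\,\E I_y\big)$ and split the double sum along the three cases of (\ref{equVarIx}). The diagonal equals $\E\,\Cl_n(k_n)\big(1-M_{k_n}(C_{n-k_n}/C_n)^2\big)$, and since $M_{k_n}(C_{n-k_n}/C_n)^2\le(R/16)^{k_n}(1+o(1))\to 0$ (using $k_n\gg 1$) this is $\sim\E\,\Cl_n(k_n)$. The pairs $x\ne y$ with $|x-y|<2k_n$ have zero joint expectation, so they contribute $-\sum\E I_x\,\E I_y$ over those pairs, of absolute value $O(nk_n\,M_{k_n}^2\,2^{-8k_n})$ since there are $O(nk_n)$ of them; divided by $\E\,\Cl_n(k_n)\sim 2n\,M_{k_n}2^{-4k_n}$ this is $O(k_n M_{k_n}2^{-4k_n})=O(k_n(R/16)^{k_n})\to 0$. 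The pairs with $|x-y|\ge 2k_n$ have joint expectation $M_{k_n}^2(C_{n-2k_n}/C_n)^2$, so per pair they contribute $M_{k_n}^2\big[(C_{n-2k_n}/C_n)^2-(C_{n-k_n}/C_n)^4\big]=M_{k_n}^2\,2^{-8k_n}\big(3(k_n/n)^2+O((k_n/n)^3)\big)$ by (\ref{equCov}); with $O(n^2)$ such pairs the total is $O(k_n^2\,M_{k_n}^2\,2^{-8k_n})$, which relative to $\E\,\Cl_n(k_n)$ is $O\big(\tfrac{k_n^2}{n}M_{k_n}2^{-4k_n}\big)=O(k_n^2(R/16)^{k_n})\to 0$. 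Adding the three contributions gives $\Var(\Cl_n(k_n))\sim\E\,\Cl_n(k_n)\sim 2n\,M_{k_n}2^{-4k_n}$.

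The only step requiring care, everything else being routine expansion of Catalan ratios, is the assertion that both off-diagonal sums are negligible next to the mean, and this rests on the \emph{strict} inequality $R<16$: since $M_k\le R^k$, the quantity $M_{k_n}2^{-4k_n}=M_{k_n}16^{-k_n}\le(R/16)^{k_n}$ decays geometrically in $k_n$ (and $k_n\to\infty$), and this geometric decay swallows the polynomial-in-$k_n$ prefactors coming from the pair counts and from (\ref{equCov}) (the extra $1/n$ in the far-apart case only helps). I would also point out that the indices within distance $2k_n$ of the endpoint $2n$ change every sum only by an $O(k_n/n)=o(1)$ relative factor, already absorbed in the error terms carried along, so no separate boundary argument is needed.
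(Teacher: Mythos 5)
Your proof is correct and follows essentially the same route as the paper: the same indicator decomposition, the same three-case split of the double sum according to $x=y$, $0<|x-y|<2k_n$, and $|x-y|\ge 2k_n$, and the same use of the Catalan-ratio expansions together with the geometric decay of $M_{k_n}16^{-k_n}$ (the paper invokes $M_k\le 13^k$ where you use $R\le 12.901$, an immaterial difference). No gaps.
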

 \begin{proof}
 By formula (\ref{equExpIx}), we have 
 \bal{
  \E \Cl_n(k_n) &= \sum_{x = 0}^{2n - 2k_n} \E I_x = \sum_{x = 0}^{2n - 2k}  M_{k_n} \Big(\frac{C_{n - k_n}}{C_n}\Big)^2 \sim 2n M_{k_n}  2^{-4k_n},
 }
 where in the last step we used (\ref{equExp}).
 For the variance we have:
  \bal{
  \Var \Big(\Cl_n(k_n)\Big) = \E\Big( \Cl_n(k_n)^2\Big) -  \Big(\E \Cl_n(k_n)\Big)^2 
  = \sum_{x, y = 0}^{2n - 2k_n}   \Big[\E (I_x I_y) - \E I_x \E I_y\Big].
  }
  The double sum over $x$ and $y$ can be split in 3 cases, according to whether (i) $x = y$, or (ii) $x \ne y$ and $|x - y| < 2k_n$, or (iii) $|x - y| \geq 2k_n$. In the first case, the number of terms is $\sim 2n$, in case (ii) it is $ \sim 4k_n\times 2n$ and in the last case it is $\sim 4n^2$. By using formulas in (\ref{equExp}), (\ref{equCov}), (\ref{equExpIx}), (\ref{equVarIx}), we write: 
    \bal{
  \Var \Big(\Cl_n(k_n)\Big) &\sim 2n \Big[ M_{k_n} 2^{-4k_n} - \big(M_{k_n} 2^{-4k_n}\big)^2\Big] 
  - 8k_n n \Big[ \big(M_{k_n} 2^{-4k_n}\big)^2 \Big]
  \\
  &+ 4n^2 \Big[3 \big(\frac{k_n}{n}\big)^2 \big(M_{k_n} 2^{-4k_n}\big)^2\Big]
  \\
  &\sim  2n  M_{k_n} 2^{-4k_n},
  }
  where we used the rigorous bound that $M_k \leq 13^k$ for sufficiently large $k$, which implies that 
 $k_n M_{k_n} 2^{-4k_n} \ll 1$. In addition, for the last term we used that $k_n \ll n$. 
 \end{proof}

 \begin{proof}[Proof of Theorem \ref{theoLargestCycle}]
 In order to prove that $\E L_n \geq c \log_2 n$, it is enough to show that for all sufficiently large $n$, with probability  $\geq 1/2$, there is a cycle of length $\geq 2c \log_2 n$. We will show a stronger statement that with probability $\geq 1/2$ (and all sufficiently large $n$), there is a \emph{cluster cycle} of length $\geq 2c \log_2 n$. 
 
We use a well-known inequality (see Theorem 4.3.1 in \cite{alon_spencer2000}) that for a non-negative integer-valued random variable $X$, it is true that 
\begin{equation}
\label{equAlonSpencer}
 \P(X = 0) \leq \frac{\Var(X)}{(\E X)^2}.
 \end{equation}
 We apply this inequality to $X = \Cl_n(k_n)$, with $k_n = c \log_2 n$. Then, by Lemma  \ref{lemmaPoisson}, 
 \bal{
 \frac{\Var(X)}{(\E X)^2} = \Big[2n M_{k_n} 2^{-4 k_n}\Big]^{-1}= \frac{2^{4 k_n}}{2 n M_{k_n}} \leq n^{4c - 1 - 2.8073 c} = n^{1.1927 c - 1}, 
 }
 where we used a conservative estimate $M_{k_n} \geq 7^{k_n} = 2^{2.8073 k_n}$. Hence, for $c \leq 0.8384$, this ratio $\to 0$ as $n \to \infty$. This implies that for $k_n \leq 0.8384 \log_2 n$, the probability $\P\big[\Cl_n(k_n) \geq 1\big] \to 1$, which completes the proof of the theorem.
 \end{proof}

 \section{Remarks.} 
 \label{sectionRemarks}
 
  1. The behavior of the largest cycle length appears to be universal with respect to a large set of models.  Plots in Figure \ref{figLargestCycleBinaryM} illustrate that the expected largest cycle length is proportional to $n^{4/5}$ not only for random m.s. with the uniform distribution, but also for other random m.s., such as simply-generated m.s. and semimeanders, in which the upper pairing is chosen uniformly at random and the lower pairing is fixed to be the rainbow $(1, 2n)$, $(2, 2n - 1)$, \ldots, $(n, n+1)$. Similar results are also observed for systems not reported in Figure \ref{figLargestCycleBinaryM}, in particular, for m.s. with weight sequence $w_0 = w_k = 1$, $w_i = 0$ for $i \ne k$, with $k = 2, 3, 4$; and for m.s. for which the weight sequence declines polynomially, $w_k = (k + 1)^{-\beta}$, for $\beta = 2$.  
 
  \begin{figure}[htbp]
\centering
              \includegraphics[width= 0.8 \textwidth]{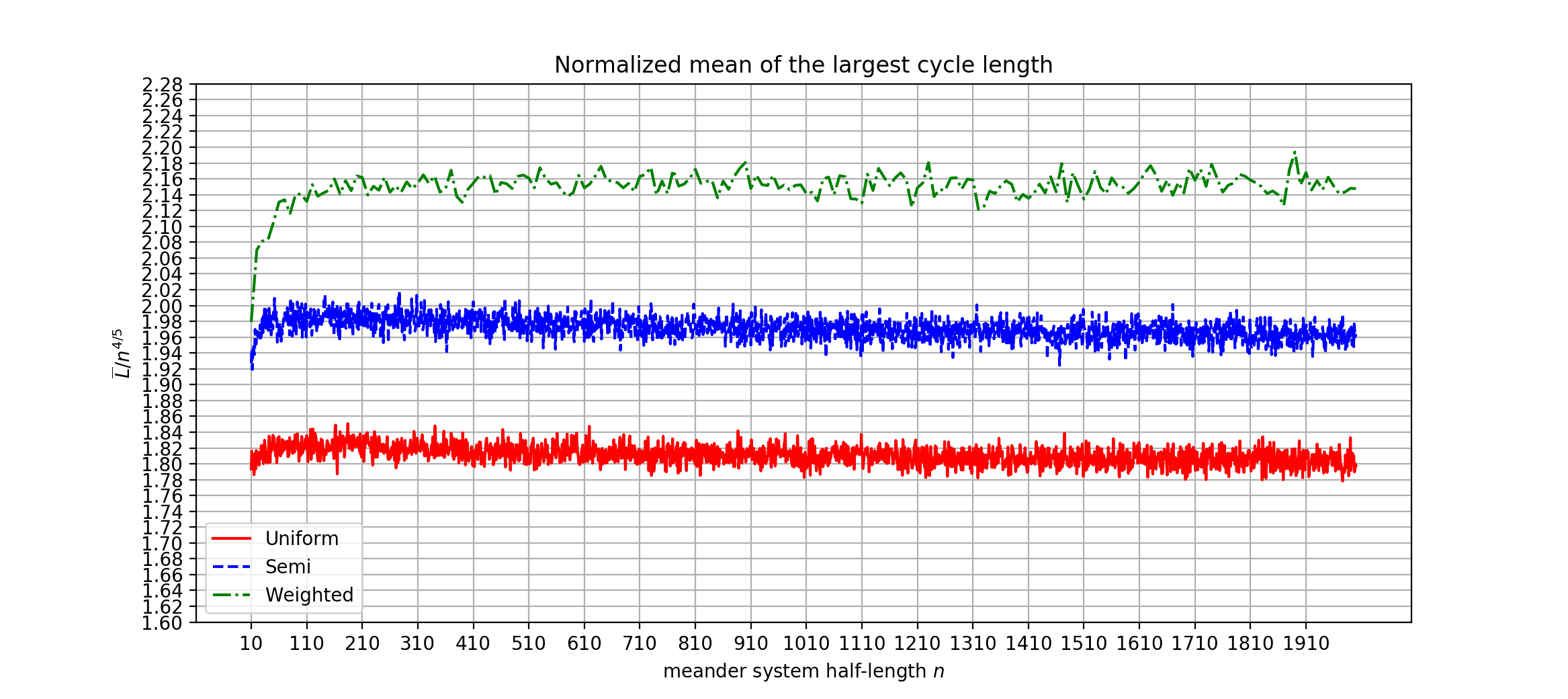}
              \caption{Plots of the largest cycle length normalized by $n^{4/5}$, where $n$ is the half-size of the m.s. In this plot $10 \leq n  \leq 2000$ and each point is the average of the largest cycle length over a sample of $4000$ independent m.s. A random ``semimeander'' systems  have its upper pairing chosen uniformly at random while its lower pairing is the ``rainbow'': $[(1, 2n), (2, 2n - 1), \ldots, (n, n + 1)]$. The weighted meander system in this example has weights $(1, 1, 1, 0, \ldots)$. [It is more time consuming to generate these m.s. so to save time they were generated for half-sizes $\{10, 20, \ldots, 2000\}$ only.]} 
              \label{figLargestCycleBinaryM}
\end{figure}

\begin{figure}[htbp]
\centering
              \includegraphics[width= 0.8 \textwidth]{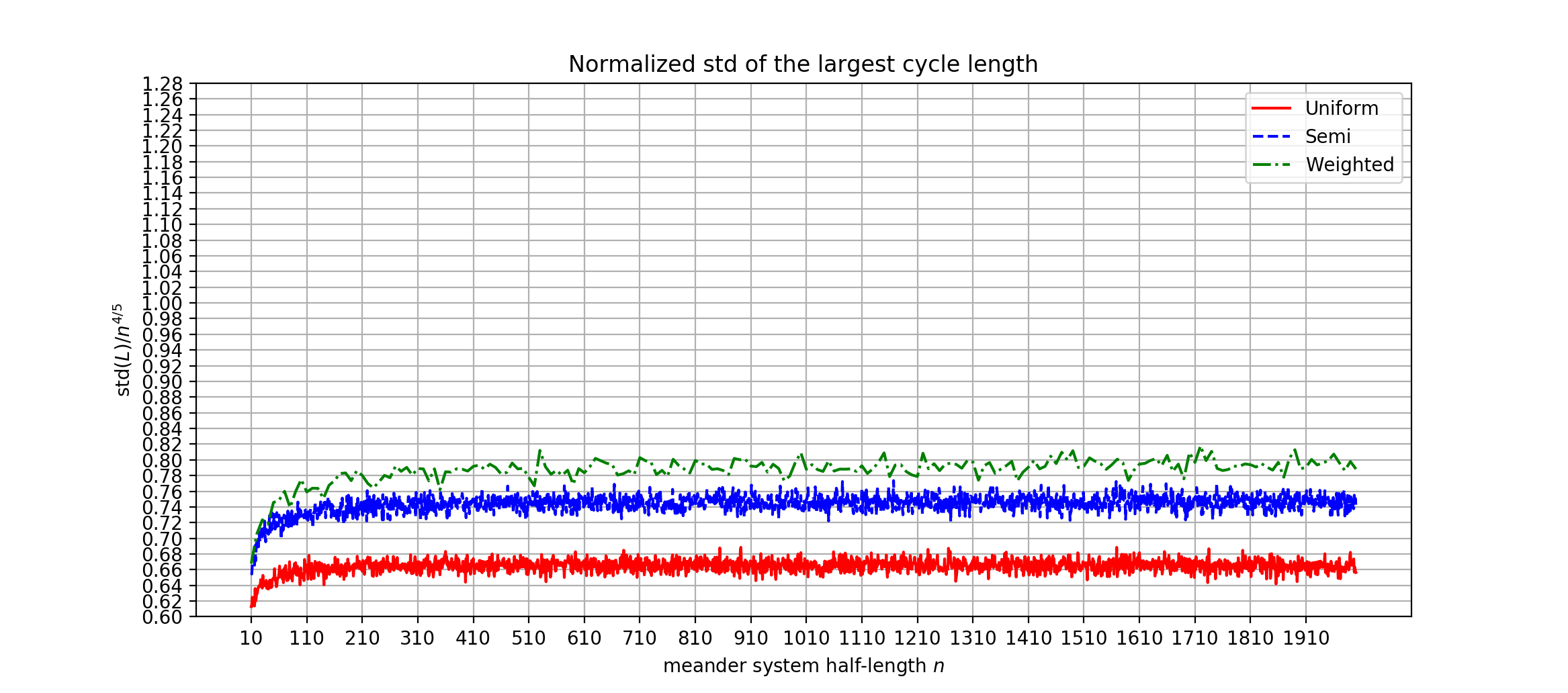}
              \caption{The estimate of the standard deviation of the largest cycle normalized by $n^{4/5}$, where $n$ is the half-size of the meander system. In the plot  $10 \leq n  \leq 2000$. The standard deviation shown in the graph is the average over a sample of  $4000$ independent random meander systems.}
              \label{figLargestCycleVariance}
\end{figure}
 
 2. The size of the largest cycle does not concentrate with the growth in $n$, in the sense that the standard deviation of this random variable seems to grow at the same rate ($n^{4/5}$) as the expectation. See an illustration in Figure \ref{figLargestCycleVariance}.

 \begin{figure}[htbp]
\centering
              \includegraphics[width= 0.7 \textwidth]{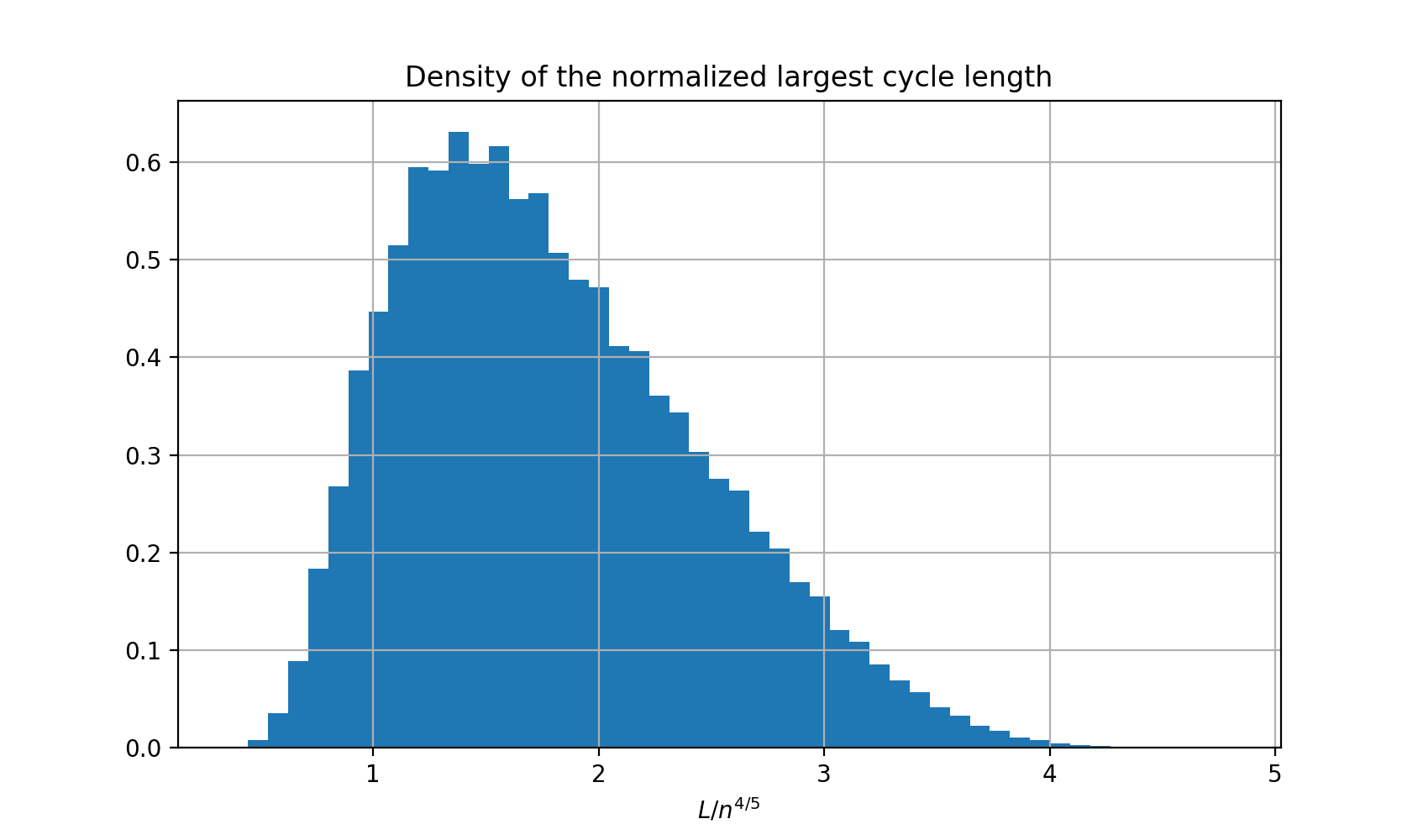}
              \caption{The histogram of the largest cycle length normalized by $n^{4/5}$. The meander system half-length $n = 2000$ and the size of the sample used for estimation is $10^6$.}
              \label{figHistLargestCLength}
\end{figure}
 
3. The distribution of the largest cycle is shown in Figure \ref{figHistLargestCLength}. Its skewness $\approx 0.59$ and it appears as one of the usual extreme value distributions.

%   \begin{figure}[htbp]
%\centering
%              \includegraphics[width= 0.7 \textwidth]{meanLargestCSpacings.png}
%              \caption{The averages of the largest cycle spacings [normalized by $n^{1/5}$].  The sample size of meander systems used to estimate the mean is $200$. For comparison, the average spacings are shown for the matched sample of points distributed uniformly on interval $[2n]$.}
%              \label{figMeanLargestCSpacings}
%\end{figure}
 
  \begin{figure}[htbp]
\centering
              \includegraphics[width= 0.7 \textwidth]{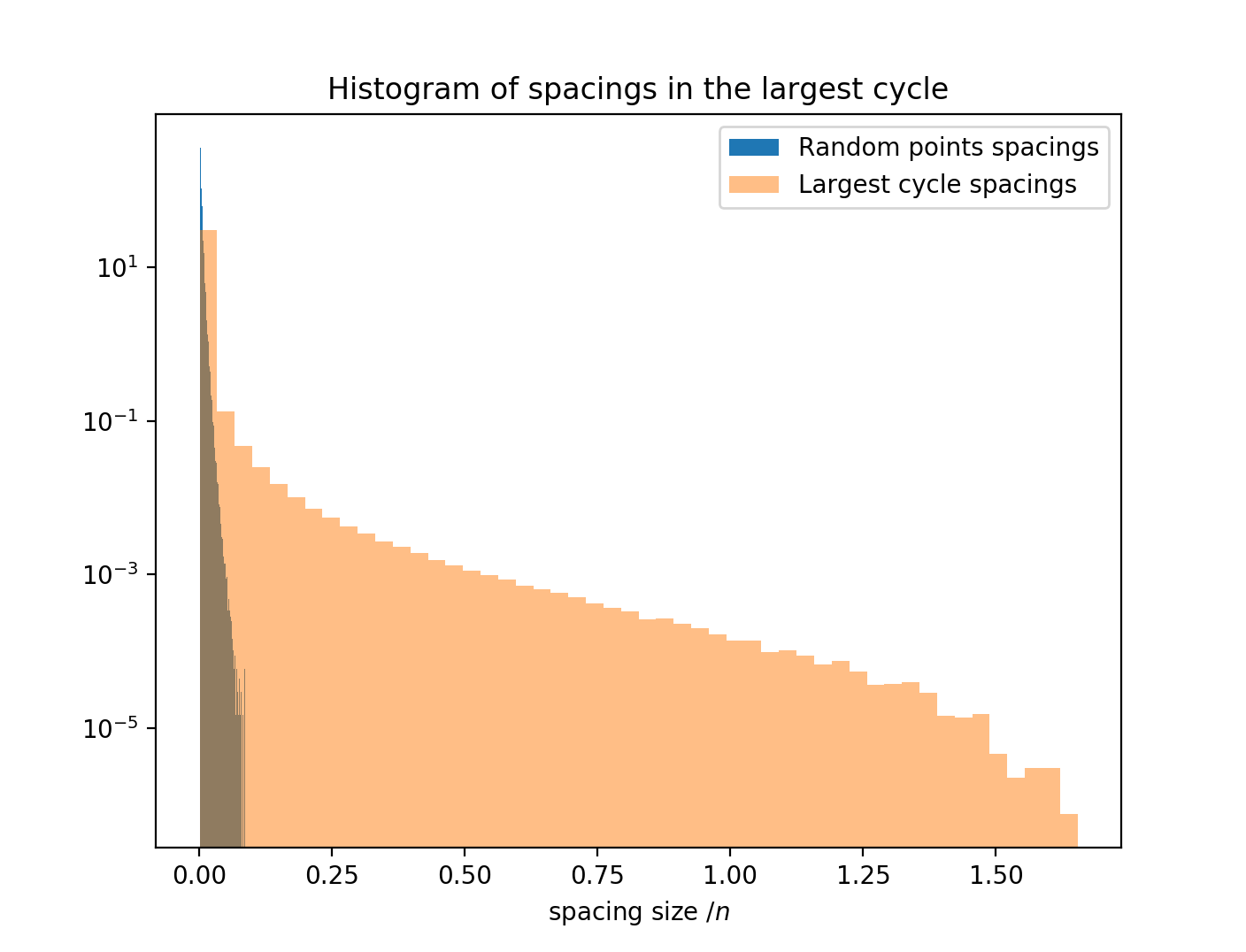}
              \caption{The histogram of the largest cycle spacings [normalized by $n$]. Each m.s. has half-length $n = 2000$ and the sample size of m.s. used to build the sample of largest cycle spacings is $5\times 10^4$. For comparison, a histogram of spacings for independent random points is shown. It shows spacing distribution for a matched sample of points which are equal in number to the largest cycle length but are chosen  uniformly at random on interval $[2n]$.}
              \label{figHistLargestCSpacings}
\end{figure}
 
 4. The spacings in the support of the largest cycle also exhibit an interesting  structure. 
 %While their average is around $n^{1/5}$, as should be expected from the average spacing of $n^{4/5}$ points, the average is smaller than the average for spacing of a matched number of uniformly distributed points. See Figure \ref{figMeanLargestCSpacings}.  
 Spacings of size 1 (which occur when x and x + 1 are both  in the support of the cycle) take around $80\%$ of all spacings. Spacings of size 3 are about $10\%$ of all spacings. (Spacings of size 2 are impossible.) These percentages seem to be insensitive to the size of the meander system. The distribution of spacings is shown in Figure \ref{figHistLargestCSpacings}. (Note the logarithmic scale of $y$-axis.) The distribution of spacings in the largest cycle  differs significantly from the distribution of spacings in the case of uniformly distributed points. In particular, there are significantly more large spacings than one would expect in a sample of independent random points. 
 
 3. The connected components of meander systems appears in other areas of mathematics. For example, in \cite{dergachev_kirillov2000}, the number of connected components is used to evaluate the index of \emph{seaweed Lie algebras}. In \cite{dgg1997}, this quantity arises in the connection with the trace of Temperley-Lieb matrices. 
 
 4. The common meaning of the word ''meander'' is a winding curve followed by a river. So, it is perhaps relevant to mention that some scaling laws have been experimentally discovered for the behavior of river flows. For example, in 1957,  Hack studied the drainage system of rivers in the Shenandoah valley and the adjacent mountains of Virginia, and formulated the law, which is now called Hack's law. Namely, there is a power law relation $l \sim a^{\beta}$, with $\beta$ close to $0.6$, between the length $l$ of a longest stream in a connected system of streams and the area of the basin that collects the precipitation contributing to the stream through its tributaries. See a review and some scaling arguments in \cite{giacometti1996} 
 and a recent mathematical model in \cite{roy_saha_sarkar2016}. 
 
 Hack's law, however, reflects the behavior of random trees rather than that of random meander systems. For random trees, related results are well-known. For example, it is known that the height of a simply-generated random tree on $n$ vertices is proportional to $n^{1/2}$ (under mild conditions on the weight sequence). (See, for example, \cite{flajolet_sedgewick2009} or \cite{janson2012}.) In contrast, in the case of meander systems, it is not clear which scaling mechanisms contribute to the exponent $4/5$ observed in numerical experiments.

 %%%%%%%%%%%%%%%%%%%%%%%%%%%%%%%%%%%%%%
 %Appendix
 %
 %%%%%%%%%%%%%%%%%%%%%%%%%%%%%%%%%%%%%%%

\appendix
\section{Appendix}
\label{sectionAppendix}
\begin{proof}[Proof of Proposition \ref{propoStaples}]  As in the proof of Theorem \ref{theoNumberRinglets},  we can use indicator functions to derive the formula
\begin{equation}
\label{numberStaples}
\E c_{st}(M) = 2 \sum_{x, y \colon 1 \leq x + 1 < y \leq 2n - 2}   p_1(x, y) p_2(x, y), 
\end{equation}
where $p_1(x, y)$ is the probability that $(x, x+1)$ and $(y, y+1)$ are pairs in a random pairing, and $p_2(x, y)$ is the probability that  $(x, y+1)$ and $(x + 1, y)$ are pairs. The coefficient $2$ is because we count both upper and lower staples. 

For $p_1(x,y)$ we have 
\begin{equation}
\label{p1}
p_1(x, y) = \frac{C_{n - 2}}{C_n} = \frac{1}{16} + O(n^{-1}),
\end{equation}
where $C_i$ are Catalan numbers and the constant in the O-term is uniform over $x$ and $y$.

For $p_2(x, y)$, we look at staples of different length $k$ separately. (Here the length of a staple with support $(x, x+1, y, y+1)$ is defined as $k = (y - x)/2$.) For a staple of length $k$, we have 
\begin{equation}
\label{p2}
p_2(x, y) = \frac{C_{n - k - 1} C_{k - 1}}{C_n},
\end{equation}
which depends only on $k$ and $n$.
Fix an $\eps \in (5/6, 1)$ and consider the sums of probabilities $p_2(x, y)$ over all $x$ and $y$ that satisfy additional conditions on length $k$. 
\bal{
S_1 &= \sum p_2(x, y), \text { where $k \leq n^\eps$},
\\ 
S_2 &= \sum p_2(x, y), \text { where $ n^\eps < k \leq n - n^\eps$},
\\
S_3 &= \sum p_2(x, y), \text { where $ n - n^\eps < k \leq n$}.
}
For $S_2$ we use the asymptotic approximation to Catalan numbers, and make the following estimate for each term in the sum, 
\bal{
p_2(x, y) &\sim \frac{4^{n - k - 1}}{\sqrt{\pi}(n - k - 1)^{3/2} } \frac{4^{k - 1}}{\sqrt{\pi}(k - 1)^{3/2} }
/\frac{4^{n}}{\sqrt{\pi}n^{3/2} }
\\
& = O\Big(n^{3/2(1 - 2\eps)}\Big) = o(n^{-1}).
}
Since the number of terms in the sum $S_2$ is no greater than $O(n^2)$, we conclude that $S_2 = o(n)$.

For $S_3$, we can write the sum as follows,
\bal{
S_3 &=  \sum_{k = n - n^\eps}^{n - 1} \sum_{x = 1}^{2n - 2k - 1} p_2(x, x + 2k)
\\
&= O\Big( n^\eps \sum_{k = n - n^\eps}^{n - 1}  C_{n - k - 1}   \frac{C_{k - 1}}{C_n}\Big),
}
where we used formula (\ref{p2}) and the observations that $p_2(x, x + 2k)$ does not depend on $x$ and that the number of terms in the inner sum is $\leq 2 n^\eps$. We can continue by changing the summation index, 
\bal{
S_3 &= O\Big( n^\eps \sum_{l = 0}^{n^\eps}  C_{l}   \frac{C_{n - l - 2}}{C_n}\Big)
\\
&= O\Big( n^\eps \sum_{l = 0}^{\infty}  C_{l} 4^{-l} \Big) = O(n^\eps),
}
where the last line follows because the sum $\sum_{l = 0} ^\infty C_l 4^{-l}$ is convergent. 

Finally, we address the sum $S_1$,
\bal{
S_1 &=  \sum_{k = 1}^ {n^\eps} \sum_{x = 0}^{2n - 2k} p(x, x + 2k)
\\
&\sim  2n \sum_{k = 1}^ {n^\eps} C_{k - 1}   \frac{C_{n - k - 1}}{C_n}
 \sim 2n \sum_{k = 1}^ {n^\eps} C_{k - 1} 4^{-(k + 1)}
\\
& \sim \frac{n}{8} \sum_{l = 0}^\infty C_{l} 4^{-l}.
}
The last sum can be calculated by noting that $C_l = \frac{1}{2\pi} \int_0^4 x^{l - \frac{1}{2}} \sqrt{4 - x} \, dx$, which leads to $\sum_{l = 0}^\infty C_{l} 4^{-l} = 2$ and so $S_1 \sim n/4$.

From the estimates on $S_1$, $S_2$, $S_3$, and formulas (\ref{numberStaples}) and (\ref{p1}), it follows that 
\bal{
\E c_{st}(M) \sim 2 \times \frac{1}{16} \frac{n}{4} = \frac{n}{32}.
}
\end{proof}

\begin{proof}[Proof of Proposition \ref{propoRings}]
We aim at estimating the sum
\bal{
\E c_O(M) = \sum_{x < y} p(x,y)^2,
}
where $p(x,y)$ is the probability that $(x, y)$ is a pair in a random pairing. 
If $y - x$ is even this probability is zero, otherwise, 
\bal{
 p(x, x + 2k + 1)  = \Big[ \frac{C_k C_{n - k - 1}}{C_n}\Big]^2.
}
So we have 
\bal{
\E c_O(M) &= \sum_{x = 1}^{2n - 1} \sum_{k = 0}^{\lfloor (2n - x - 1)/2\rfloor}  \Big[ \frac{C_k C_{n - k - 1}}{C_n}\Big]^2
\\
&=
\sum_{k=0}^{n - 1} \sum_{x = 0}^{2n - 2k - 1} 
\Big[ \frac{C_k C_{n - k - 1}}{C_n}\Big]^2
\\
&= \sum_{k=0}^{n - 1} (2n - 2k)\Big[ \frac{C_k C_{n - k - 1}}{C_n}\Big]^2
}
 Very similar to the previous proof, by using the asymptotic approximations for the Catalan numbers, we can show that the main contribution is given by the rings with radius $k < n^\eps$, where $\eps < 1$ is appropriately large.   Then, we can apply asymptotic approximation to $C_{n - k - 1}$ and write:
 \bal{
\E c_O(M) &\sim  \sum_{k = 0}^{n^\eps} (2n - 2k) C_k^2 4^{-2 k - 2}
\\
&\sim \frac{n}{8} \sum_{k = 0}^{\infty} \big[C_k 4^{-k }\big]^2 = \Big(\frac{2}{\pi} - \frac{1}{2}\Big) n,
}
where the proof of  the last equality can be found in \cite{lando_zvonkin92}, see their Proposition in Section 5.10.  
\end{proof}

\begin{proof}[Proof of Theorem \ref{theoDistrLargestBlock}]
Because of the bijection between comb-like meanders and non-crossing partitions, it is enough to prove the corresponding result about the largest block in a non-crossing partition of $[n]$. First, we find the generating function for NC partitions with blocks whose length is $\leq k$. The symbolic formula for the class of these partitions is 
\bal{
C^{(k)} = \eps + SET_1(Z) \times SEQ_1(C^{(k)}) + \ldots + SET_k(Z) \times SEQ_k(C^{(k)}).
}
 Here $\eps$ denotes the empty partition, $Z$ is an atom (that is, an element of a partition), and $\mathrm{SET}_{k}(Z) \times \mathrm{SEQ }_{k}(C)$ corresponds to a block of size $k$ containing a marked element (``root''), together with a sequence of NC partitions which are nested between the  elements of this block. See Figure \ref{figNCpartition} for illustration. 

\begin{figure}[htbp]
\centering
              \includegraphics[width=0.4\textwidth]{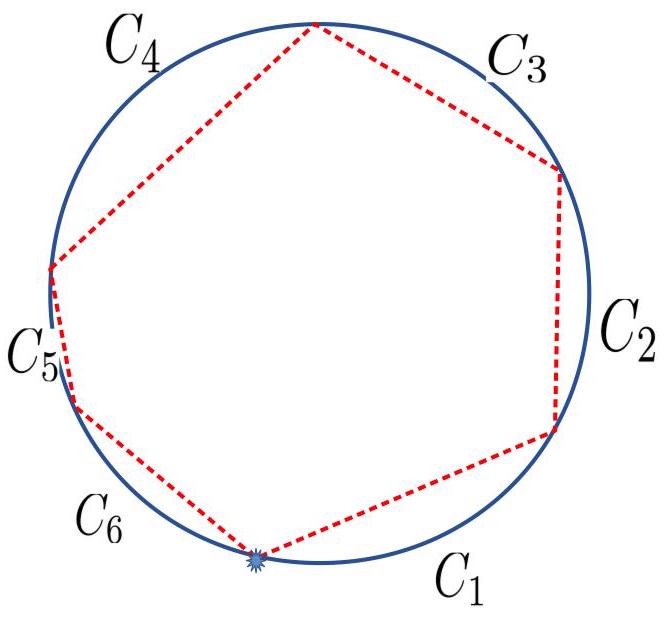}
              \caption{Construction of a non-crossing partition with a block of size 6}
              \label{figNCpartition}
\end{figure}

This leads to the following equation for the generating function:
\bal{
C^{(k)}(z) &= 1 + z C^{(k)}(z) + \ldots + \Big[zC^{(k)}(z)\Big]^k
\\
&=\frac{1 - \Big[zC^{(k)}(z)\Big]^{k + 1}}{1 - zC^{(k)}(z)},
}
or 
\bal{
\Big[zC^{(k)}(z)\Big]^{k + 1} - z C^{(k)}(z)^2 + C^{(k)}(z) - 1 = 0.
}

The coefficient before $z^n$ in the power expansion of $C^{(k)}(z)$, denoted $[z^n]C^{(k)}(z)$, equals the number of NC partitions of $[n]$, in which all blocks have lengths $\leq k$. The total number of NC partitions of $[n]$ is the Catalan number $C_n$. So, the probability that a random NC partition has no block larger than $k$ is $[z^n]C^{(k)}(z)/C_n$, and we are left with the task of evaluating the asymptotic of $[z^n]C^{(k)}(z)$.

Let us use the notation 
$
y(z) = zC^{(k)}(z).
$
Then the equation for $y(z)$ is $y = G(z, y)$, where $G(z, y) = z + y^2 - z y^{k + 1}$. 

Now, let us summarize some tools from the book by Flajolet and Sedgewick, which allows us to write the asymptotic expressions for the coefficients of $y(z)$. 

 We say that that a function of complex argument $y(z)$ is an \emph{analytic generating function (analytic GF)} if it is analytic at zero and if  its expansion, 
 \begin{equation}
 \label{seriesY}
 y(z) = \sum_{n = 0}^{\infty} y_n z^n, 
 \end{equation}
 have real non-negative coefficients $y_0 = 0$, $y_n \geq 0$.
 \begin{defi}
 \label{defiStableSingularity}
 An analytic GF $y(z)$ is said to \emph{have a stable dominant singularity}\footnote{Flajolet and Sedgewick say that $y(z)$ belongs to the \emph{smooth implicit-function schema}.} at $z = r > 0$, if there exists a bivariate function $G(z, w)$ such that 
 \begin{equation}
 y(z) = G(z, y(z)), 
 \end{equation}
 and $G(z, y(z))$ satisfies the following conditions:
\begin{enumerate}[(A)]
 \item $G(z, w) = \sum_{m, n \geq 0} g_{m, n} z^m w^n$ is analytic in a domain $|z| < R$ and $|w| < S$ for some $R, S > 0$.
 \item Coefficients $g_{m, n}$ are non-negative reals, $g_{0, 0} = 0$, $g_{0, 1}\ne 1$ and $g_{m, n} > 0$ for some $m$ and for some $n \geq 2$. 
 \item  The singularity $r < R$ and there exists $s$ such that $0 < s < S$ and that
 \begin{align}
 \label{equ1}
 G(r, s) &= s,
 \\
 \label{equ2}
 G_w(r, s) &= 1,
 \end{align} 
\end{enumerate}
 \end{defi}
 The function $G(z, w)$ is called the \emph{characteristic function} for $y(z)$ and the system (\ref{equ1}), 
(\ref{equ2}) -- the \emph{characteristic system}. 
 
 The condition in (C) is aimed to ensure that $r$ is a singularity of $y(z)$ with $y(r) = s$. The conditions in (A) and (B), especially the non-negativity of the coefficients, ensure that this singularity is a quadratic singularity with the smallest absolute value among all singularities of $y(z)$ (which is why we call it ``the stable dominant singularity'').  
 %%%%%%%%%%%%%%%%%%%%%%%%%
 %Flajolet-Sedgewick theorem about expansion of functions with 
 % a stable singularity
 %
 %%%%%%%%%%%%%%%%%%%%%%%%%%%%
 The following result has  a long history, - a weaker version can be found already in the classic book by Hille \cite{hille62}, -- Theorem 9.4.6 on p. 274 of volume I. In the form similar to the following, it was proved by Meir and Moon in \cite{meir_moon89}. We give the formulation from Flajolet-Sedgewick book \cite{flajolet_sedgewick2009}, cf. Theorem VII.3 on p. 468. 
 
 \begin{theo}[Meir - Moon]
 \label{theoStableSingularity}
 Let $y(z)$ be an analytic GF that has a stable singularity at $r$ with the characteristic function $G(z, w)$. Then the series in (\ref{seriesY}) converges at $z = r$ and 
 \begin{equation}
 y(z) = s - \gamma \sqrt{1 - z/r} + O(1 - z/r),
 \end{equation}
 in a neighborhood of $z = r$, where $s = y(r)$ and 
 \begin{equation}
 \gamma = \sqrt{\frac{2 r G_z(r, s)}{G_{ww}(r, s)}}.
 \end{equation}
 \end{theo}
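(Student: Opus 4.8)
The plan is the classical two-stage argument for what Flajolet and Sedgewick call the smooth implicit-function schema: a \emph{global} stage, locating the dominant singularity of $y(z)$ precisely at $z=r$ with $y(r)=s<\infty$, and a \emph{local} stage, extracting the square-root expansion from a Weierstrass-type factorization of $w-G(z,w)$ at $(r,s)$. Write $\Phi(z,w):=w-G(z,w)$, so that $\Phi(z,y(z))=0$ and $\Phi_w(z,w)=1-G_w(z,w)$.

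First I would prove that $y$ is analytic in $|z|<r$ and, more delicately, that its radius of convergence is exactly $r$ with $y(r)=s$. On the real interval $[0,r)$ the series $y$ has nonnegative coefficients, hence is nondecreasing with $y(0)=y_0=0$; using that $G$ and $G_w$ are nondecreasing in each of their two nonnegative arguments, together with $G(r,s)=s$ and $G_w(r,s)=1$, one checks that $0\le y(x)<s$ and $G_w(x,y(x))<1$ on $[0,r)$, so $\Phi_w(x,y(x))\neq 0$ and the analytic implicit-function theorem continues $y$ locally; the majorant bound $|G(z,w)|\le G(|z|,|w|)$ and a contraction argument then give $|y(z)|\le y(|z|)<s<S$ for $|z|<r$, so $y$ is analytic there. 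For the radius of convergence: the limit $s':=\lim_{x\uparrow r}y(x)\in[0,s]$ exists by monotonicity, and $G(r,s')=s'$ by continuity; if $s'<s$ then $G_w(r,s')<1$, so the implicit-function theorem would continue $y$ past $r$ along the positive axis, and since $y$ has nonnegative coefficients Pringsheim's theorem would force its radius beyond $r$, eventually producing a point $(r',s')$ with $r'>r$, $0<s'<S$ solving the same characteristic equations and contradicting the uniqueness of the characteristic solution in hypothesis (C). Hence $s'=s$, and Abel's theorem gives $\sum_n y_n r^n=s<\infty$, the asserted convergence.

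For the local stage, Taylor-expand $\Phi$ about $(r,s)$. Since $\Phi(r,s)=0$ and $\Phi_w(r,s)=0$, the lowest-order terms are $\Phi(z,w)=-G_z(r,s)(z-r)-\tfrac12 G_{ww}(r,s)(w-s)^2+(\text{higher order})$; here $G_{ww}(r,s)=\sum_{m,n}g_{m,n}n(n-1)r^m s^{n-2}>0$ by hypothesis (B), and $G_z(r,s)>0$ (the schema is non-degenerate, so some $g_{m,n}$ with $m\ge1$ is positive, and $r,s>0$). Thus $\Phi(r,\cdot)$ has a zero of order exactly $2$ at $w=s$, and the Weierstrass preparation theorem, followed by completing the square, factors $\Phi(z,w)=U(z,w)\big[(w-\sigma(z))^2-c(z)\big]$ in a bidisk at $(r,s)$, with $U$ analytic and nonzero at $(r,s)$ and $\sigma,c$ analytic, $\sigma(r)=s$, $c(r)=0$. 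Computing $\Phi_z(r,s)$ and $\Phi_{ww}(r,s)$ through this factorization gives $U(r,s)=-\tfrac12 G_{ww}(r,s)$ and $c'(r)=-2G_z(r,s)/G_{ww}(r,s)$, hence
\[
c(z)=\gamma^2(1-z/r)+O\big((1-z/r)^2\big),\qquad \gamma=\sqrt{\frac{2rG_z(r,s)}{G_{ww}(r,s)}}.
\]
Since $y$ solves $\Phi(z,y(z))=0$ it equals one of the two roots $\sigma(z)\pm\sqrt{c(z)}$, and the minus branch is forced by $y(x)<s=y(r)$ as $x\uparrow r$ (from the monotonicity above); with $\sigma(z)=s+O(1-z/r)$ this yields $y(z)=s-\gamma\sqrt{1-z/r}+O(1-z/r)$ in a slit neighborhood of $r$, as claimed.

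The genuinely delicate point is the global stage: guaranteeing that $y$ does not blow up, leave the region $|w|<S$, or develop a singularity of a different nature before $z$ reaches $r$, so that the branch point at $(r,s)$ is truly the dominant singularity. This is exactly where the nonnegativity of the Taylor coefficients of $y$ (via Pringsheim), the monotonicity of $G$ and $G_w$ on the positive quadrant, and the uniqueness encoded in hypothesis (C) must all be combined. The local computation that produces $\gamma$ is by comparison routine — essentially the leading balance $-G_z(r,s)(z-r)\sim\tfrac12 G_{ww}(r,s)(y(z)-s)^2$.
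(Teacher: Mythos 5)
The paper does not prove this statement at all: it is quoted as a known result, with the text explicitly deferring to Meir--Moon and to Theorem VII.3 of Flajolet--Sedgewick (the ``smooth implicit-function schema''), so there is no internal proof to compare against. Your reconstruction is the standard proof of that theorem and its local half is correct and complete in all essentials: the Taylor expansion of $\Phi(z,w)=w-G(z,w)$ at $(r,s)$ with $\Phi=\Phi_w=0$, the Weierstrass factorization $\Phi=U\bigl[(w-\sigma(z))^2-c(z)\bigr]$, the identities $U(r,s)=-\tfrac12 G_{ww}(r,s)$ and $c'(r)=-2G_z(r,s)/G_{ww}(r,s)$ (both of which check out), and the selection of the minus branch from $y(x)\uparrow s$ all go through and yield exactly the stated $\gamma$. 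You are also right that the real content is the global stage, and your toolkit there (nonnegativity of coefficients, Pringsheim, monotonicity of $G$ and $G_w$ on the positive quadrant, continuation by the implicit function theorem) is the correct one.

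Two soft spots are worth tightening. First, the endgame of your global stage is slightly miswired: from $s'<s$ and $G_w(r,s')<1$ you continue $y$ analytically past $r$ on the positive axis, but the continuation need not terminate at a second characteristic point $(r',s')$ inside the domain, so ``contradicting uniqueness of the characteristic solution'' is not the clean contradiction. The contradiction you actually want is with the hypothesis itself: condition (C) of the definition asserts that $r$ \emph{is} a singularity of $y$, while Pringsheim plus the continuation would make $y$ analytic at $r$. (Alternatively, in the Flajolet--Sedgewick arrangement one first proves $y(x)\le s$ for all $x\le r$ by induction on the iterates $y^{(j+1)}=G(z,y^{(j)})$, which also delivers the convergence of $\sum y_n r^n$ without a separate Abel step.) Second, the positivity $G_z(r,s)>0$ deserves one sentence: condition (B) only guarantees $g_{m,n}>0$ for some $m$ and some $n\ge 2$, so you should note that if $G$ had no genuine $z$-dependence then $y$ would be constant and singularity-free, contradicting (C); hence some $g_{m,n}$ with $m\ge1$ is positive and $G_z(r,s)>0$. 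Neither issue affects the validity of the approach.
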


We apply this result to $y(z) = zC^{(k)}(z)$. 
The singularity point $\big(z_0, y_0 = y(z_0)\big)$  solves the characteristic system:
\bal{
y &= z + y^2 - z y^{k + 1},
\\
1 &= 2y - (k + 1)z y^k.
}
The first equation of the system gives 
\bal{
z = \frac{y(1 - y)}{1 - y^{k + 1}},
}
and after plugging this expression into the second equation we obtain:
\bal{
y = \frac{1}{2}\Big[1 + k y^{k + 1} - (k - 1) y^{k + 2}\Big].
}
The solution for this equation is 
\bal{
y_0 = \frac{1}{2} + \frac{k + 1}{2^{k + 3}} + O\Big(\frac{k}{2^{2k}}\Big).
}
And then, 
\bal{
z_0 = \frac{y_0(1 - y_0)}{1 - y_0^{k + 1}} = \frac{1}{4}\Big(1 + \frac{1}{2^{k + 1}}\Big) + O\Big(\frac{k^2}{2^{2k}}\Big),
}
In order to apply Theorem \ref{theoStableSingularity} we also compute
\bal{
\gamma = \sqrt{\frac{2 z_0 P_z(z_0, y_0)}{P_{yy}(z_0, y_0}} = \frac{1}{2} + O\Big(\frac{k^2}{2^{k}}\Big)
}
and conclude that 
\bal{
y(z) = y_0 - \gamma \sqrt{1 - z/z_0} + O(1 - z/z_0).
}
Using the power expansion for the square root and Theorem VI.4 in \cite{flajolet_sedgewick2009} to justify that the error term in the formula for $y(z)$ can be neglected, we find that 
\bal{
[z^n] y(z) = \gamma \Big[\frac{1}{2\sqrt{\pi}n^{3/2}} + O(n^{-5/2})\Big]\Big(\frac{1}{z_0}\Big)^n.
}
For Catalan numbers (total number of NC partitions of $[n]$) we have the asymptotic approximation
\bal{
C_n = 4^n\Big[\frac{1}{\sqrt{\pi n^{3/2}}} + O\Big(\frac{1}{n^{5/2}}\Big)\Big].
}
Hence we have the following estimate,
\bal{
\P\{L_n \leq k\} &= \frac{[z^n]C^{(k)}(z)}{C_n} = \frac{[z^{n + 1}]y(z)}{C_n} 
\\
&= 4^{-(n + 1)}z_0^{-(n+1)}\Big(1 + O(n^{-1}) + O(k^2/2^k)\Big).
\\
&= \Big[\Big(1 + \frac{1}{2^{k + 1}}\Big) + O\Big(\frac{k^2}{2^{2k}}\Big)\Big]^{-(n+1)}\Big(1 + O(n^{-1}) + O(k^2/2^k)\Big).
}
If we use $k := \lfloor \log_2 n \rfloor + x$ and recall that we defined $\alpha(n) = 2^{\{\log_2 n\}}$,  then we find that  $2^{k + 1} = 2^{\lfloor \log_2 n \rfloor + x + 1} =2^{x + 1} \frac{n}{\alpha(n)}$. Plugging this into the previous expression, we find that 
\bal{
\P\{L_n \leq k\} &=  \Big[1 + \alpha(n) 2^{-(x + 1)} \frac{1}{n} + O\Big(\frac{\log^2 n}{n^2}\Big)\Big]^{-(n+1)}\Big(1 + O\big(\frac{\log^2 n}{n}\big)\Big)
\\
&=\exp\Big( - \alpha(n) 2^{-(x + 1)}\Big) \Big( 1 + O\big(n^{-1}\log^2 n\big)\Big).
}
\end{proof}

\bibliographystyle{plain}
\bibliography{comtest}

\begin{thebibliography}{10}

\bibitem{albert_paterson2005}
M.~H. Albert and M.~S. Paterson.
\newblock Bounds for the growth rate of meander numbers.
\newblock {\em Journal of Combinatorial Theory, Series A}, 112:250--262, 2005.

\bibitem{alon_spencer2000}
Noga Alon and Joel~H. Spencer.
\newblock {\em The Probabilistic Method}.
\newblock John Wiley and Sons, Inc., second edition, 2000.

\bibitem{arnold88}
V.~I. Arnold.
\newblock The branched covering \mbox{$CP^2 \to S^4$}, hyperbolicity and
  projective topology.
\newblock {\em Sibirsk. Mat. Zh.}, 29(5):36 --47, 1988.

\bibitem{dgzz2019}
Vincent Delecroix, Elise Goujard, Peter Zograf, and Anton Zorich.
\newblock Enumeration of meanders and \mbox{M}asur-\mbox{V}eech volumes.
\newblock arXiv: 1705.05190, March 2019.

\bibitem{dergachev_kirillov2000}
Vladimir Dergachev and Alexandre Kirillov.
\newblock Index of \mbox{L}ie algebras of seaweed type.
\newblock {\em Journal of Lie Theory}, 10(331-343), 2000.

\bibitem{diaconis_erdos2004}
Persi Diaconis and Paul Erd\"{o}s.
\newblock On the distribution of the greatest common divisor.
\newblock {\em Institute of Mathematical Statistics Lecture Notes}, 45:56--61,
  2004.

\bibitem{fiedler_castaneda2012}
Bernold Fiedler and Pablo Castaneda.
\newblock Rainbow meanders and \mbox{C}artesian billiards.
\newblock {\em Sao Paulo Journal of Mathematical Sciences}, 6(2):247 -- 275,
  2012.

\bibitem{flajolet_sedgewick2009}
Philippe Flajolet and Robert Sedgewick.
\newblock {\em Analytic Combinatorics}.
\newblock Cambridge University Press, 2009.

\bibitem{dgg1997}
P.~Di Francesco, O.~Golinelli, and E.~Guitter.
\newblock Meanders and the \mbox{T}emperley-\mbox{L}ieb algebra.
\newblock {\em Communications in Mathematical Physics}, 186(1):1 -- 59, 1997.

\bibitem{dgg2000}
P.~Di Francesco, O.~Golinelli, and E.~Guitter.
\newblock Meanders: exact asymptotics.
\newblock {\em Nuclear Physics B}, 570:699--712, 2000.

\bibitem{franz_earnshaw2002}
Reinhard O.~W. Franz and Berton~A. Earnshaw.
\newblock A constructive enumeration of meanders.
\newblock {\em Annals of Combinatorics}, 6:7 -- 17, 2002.

\bibitem{fukuda_nechita2019}
Motohisa Fukuda and Ion Nechita.
\newblock Enumerating meandric systems with large number of loops.
\newblock {\em Annales de l'Institut Henri Poincare D Combinatorics, Physics
  and their Interactions}, 6(4):607--640, 2019.

\bibitem{hille62}
Einar Hille.
\newblock {\em Analytic Function Theory}, volume~1.
\newblock Ginn and Company, 1962.

\bibitem{janson2012}
Svante Janson.
\newblock Simply generated trees, conditioned \mbox{G}alton--\mbox{W}atson
  trees, random allocations and condensation.
\newblock {\em Probability Surveys}, 9:103--252, 2012.

\bibitem{jensen2000}
Iwan Jensen.
\newblock A transfer matrix approach to enumeration of plane meanders.
\newblock {\em Journal of Physics A: Mathematical and General}, 33:5953 --
  5963, 2000.

\bibitem{jensen_guttmann2000}
Iwan Jensen and Anthony~J. Guttmann.
\newblock Critical exponents of plane meanders.
\newblock {\em Journal of Physics A: Mathematical and General}, 33:L187--L192,
  2000.

\bibitem{kortchemski_marzouk2017}
Igor Kortchemski and Cyril Marzouk.
\newblock Simply-generated non-crossing partition.
\newblock {\em Combinatorics, Probability and Computing}, 26(4):560--592, 2017.

\bibitem{lando_zvonkin92}
S.~K. Lando and A.~K. Zvonkin.
\newblock Meanders.
\newblock {\em Selecta Mathematica Sovietica}, 11(2):117 -- 144, 1992.

\bibitem{lando_zvonkin93}
S.~K. Lando and A.~K. Zvonkin.
\newblock Plane and projective meanders.
\newblock {\em Tneortical Computer Science}, 117:227--241, 1993.

\bibitem{meir_moon89}
A.~Meir and J.~W. Moon.
\newblock On an asymptotic method in enumeration.
\newblock {\em Journal of Combinatorial Theory, Series A}, 51:77-- 89, 1989.

\bibitem{giacometti1996}
Riccardo Rigon, Ignacio Rodriguez-Iturbe, Amos Maritan, Achille Giacometti,
  David~G. Tarboton, and Andrea Rinaldo.
\newblock On \mbox{H}ack's law.
\newblock {\em Water Resources Research}, 32(11):3367 -- 3374, November 1996.

\bibitem{roy_saha_sarkar2016}
Rahul Roy, Kumarjit Saha, and Anish Sarkar.
\newblock Hack's law in a drainage network model: \mbox{A} brownian web
  approach.
\newblock {\em Annals of Applied Probability}, 26(3):1807 -- 1836, 2016.

\end{thebibliography}
\end{document}